\documentclass[a4paper,10pt,leqno]{article}
\usepackage{geometry}
\geometry{hmargin=2.5cm,vmargin=1.5cm}
\usepackage[latin1]{inputenc} 

\usepackage{amsmath,amsfonts,amssymb}
\usepackage{dsfont}

\usepackage{amsthm}

\usepackage[all]{xy}
\usepackage{mathrsfs}
\newtheorem{theorem}{Theorem}
\newtheorem{lemma}{Lemma}
\newtheorem{example}{Assumptions}
\newtheorem{prop}{Proposition}
\usepackage{graphicx}
\usepackage{stackrel}
\usepackage[english]{babel} 
\usepackage{xcolor}

\theoremstyle{definition}

\newtheorem{definition}{Definition}[section]
\numberwithin{equation}{section}

\newtheorem{Rem}{Remark}[section]
\title{Local well-posedness for a class of singular Vlasov equations}
\author{Thomas Chaub}
\date{}

\newcommand{\norm}[1]{{\left\lVert#1\right\rVert}}

\newcommand{\intr}[1]{\int_{\bf R^{n}}}

\newcommand{\intrd}[1]{\int_{\bf R^{2n}}}

\newcommand{\intt}[1]{\int_{\bf T^{n}}}

\listfiles

\begin{document}
\renewcommand{\proofname}{Proof}
\maketitle
\renewcommand{\abstractname}{Abstract}
\renewcommand{\contentsname}{Summary}

\begin{abstract} 
In this article we study a singular Vlasov system on the torus where the force field has the smoothness of a (fractional) derivative $D^{\alpha}$ of the density, where $\alpha>0$. We prove local well-posedness in Sobolev spaces without restriction on the data. This is in sharp contrast with the case $\alpha=0$ which is ill-posed in Sobolev spaces for general data.  
\end{abstract}

\bigbreak
\tableofcontents

\bigbreak

\section{Introduction and main result}

\subsection{Context and notations}

In this article we focus on a class of nonlinear singular Vlasov systems in the torus $\mathbb{T}^d=\mathbb{R}^d/2\pi \mathbb{Z}^d$, a prototypal example is 
\begin{equation}\label{SystRef2}
 \left\{
    \begin{array}{ll}
        \partial_t{f}+v\cdot \nabla_x f + E  \cdot \nabla_v f  = 0,\\
        \gamma E =-\nabla_x V, \\
	(-\Delta_x)^{\alpha/2}V= \left(\rho-\rho_0\right), \\
	f (0,x,v) = f^0 (x,v),
    \end{array}
\right.
\end{equation}

where the parameter $\gamma$ will be taken to $1$ or $-1$, and the equation will be respectively called repulsive or attractive.  The function $f$ stands for a distribution function in the domain $\mathbb{T}^d\times\mathbb{R}^d$ and may represent the distribution of electrons in a plasma, or the density of stars in stellar dynamics. The density associated with $f$ will be denoted $\rho=\int_{\mathbb{R}^d}f(t,x,v) dv$. The term $\rho_0=\int_{\mathbb{R}^d\times \mathbb{T}^d}f^0(t,x,v) dvdx$ corresponds, in the repulsive setting, to the density of ions. In the attractive setting, we remove the mean value of $\rho$ as a trick called the Jeans swindle, which is mathematically relevant \cite{Swindle}. The initial condition, $f^0$, can  be taken non-negative, and $\alpha>0$ is a parameter. \\
The aim of this work is to give a local well-posedness theory in Sobolev regularity for the general case $\alpha>0$. To introduce our main result, we shall first introduce some notations.

\bigbreak

\bigbreak
For $k\in \mathbb{N}$, $r\in \mathbb{N}$, we introduce the weighted Sobolev norms 
\begin{equation*}
\norm{f}_{\mathcal{H}_{r}^{k}}:=\left(\sum_{\vert\alpha\vert+\vert\beta\vert\leq k}\int_{\mathbb{T}^{d}}\int_{\mathbb{R}^{d}} (1+\vert v \vert ^{2} )^{r} \vert \partial_{x}^{\alpha}\partial_{v}^{\beta}f\vert^{2}dvdx   \right)^{1/2},
\end{equation*}
where, for $\alpha=(\alpha_{1},..., \alpha_{d}), \beta=(\beta_{1},...,\beta_{d})\in\mathbb{N}^{d}$, we write 
\begin{equation*}
\vert\alpha\vert =\sum_{i=1}^{d} \alpha_{i},\quad \vert\beta\vert =\sum_{i=1}^{d} \beta_{i},
\end{equation*}

\begin{equation*} 
\partial_{x}^{\alpha}:= \partial_{x_{1}}^{\alpha_{1}}...\partial_{x_{d}}^{\alpha_{d}}, \quad \partial_{x}^{\beta}:= \partial_{x_{1}}^{\beta_{1}}...\partial_{x_{d}}^{\beta_{d}}.
\end{equation*}

We will also use the classic Sobolev spaces. We will write $H_{x,v}^{k}$ (resp $W^{k,\infty}_{x,v})$ the standard Sobolev space for the norm $L^{2}$ (resp $L^{\infty}$) for functions depending on  $(x,v)$, and  $H_{x}^{k}$ for functions only depending on $x$. 
\bigbreak

\bigbreak
We will use the following notation and convention for the Fourier transform of a function $U$ 
\begin{equation*}
\widehat{U} (k) :=(2\pi)^{-d}\int_{\mathbb{T}^d} U(x) e^{-ik\cdot x} dx.
\end{equation*}
\bigbreak

\bigbreak

Instead of (\ref{SystRef2}), we consider a more general system under the form 
\begin{equation}\label{SystRef}
 \left\{
    \begin{array}{ll}
        \partial_t{f}+v\cdot \nabla_x f + E  \cdot \nabla_v f  = 0,\\
        E =-\nabla_x \int_{\mathbb{T}^d} U(x-y) \left(\rho(y)-\rho_0\right)dy, \\
	f (0,x,v) = f^0 (x,v),
    \end{array}
\right.
\end{equation}
making the following assumptions for $U$ :
\begin{example} [A1]
$\widehat{U}(0)=0$ and there exists $\alpha>0$, $C_{\alpha}>0$ such that 
\begin{equation}\label{ass}
\forall k\neq 0, \:k\in\mathbb{Z}^d, \quad\vert \widehat{U} (k)\vert \leq \frac{C_{\alpha}}{\vert k\vert^{\alpha}}.
\end{equation}
\end{example}
In particular, by taking $\widehat{U}(k)=\pm \frac{1}{\vert k \vert ^{\alpha}}$, $\widehat{U}(0)=0$, we recover (\ref{SystRef2}).\\
Note that we can also consider potentials $U=\displaystyle \sum_{i=1}^{p} U_i$ such that each $U_i$ satisfies (\ref{ass}) for $\alpha_i>0$. Then $U$ satisfies (\ref{ass}) with $\alpha=\min(\alpha_i)_{i\in[1,p]}$. Those types of potentials may appear in some physical models in gravitation (see the discussion below about Manev potentials).\bigbreak
We denote by $\lfloor x \rfloor$ the floor value of $x$ and
\begin{equation}
m_{0}=3+\frac{d}{2}+p_{0}, \quad p_{0}=\lfloor \frac{d}{2} \rfloor+1, \quad r_{0}=\max\left(d,2+\frac{d}{2}\right).
\end{equation}
Our main well-posedness result is the following

\begin{theorem}\label{MT}
Let $f^0\in \mathcal{H}^{m}_{2r}$ with $m>m_0$, $2r>r_0$. Assuming (A1), then there exists $T>0$ for which there exists a unique solution of the system (\ref{SystRef}) with initial condition $f^0$ and such that $f \in\mathcal{C}([0,T],\mathcal{H}^{m-1}_{2r})$, $\rho \in L^{2}([0,T],H_x^{m})$.
\end{theorem}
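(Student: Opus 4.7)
The plan is the classical strategy for quasilinear kinetic equations: regularize, obtain uniform weighted energy estimates, pass to the limit by compactness, and then prove uniqueness at low regularity. I would first regularize the kernel $U$ by truncating its Fourier modes to $|k|\leq 1/\varepsilon$; the resulting system has a smooth, bounded kernel, so smooth approximate solutions $f_\varepsilon$ exist on some interval via the Cauchy-Lipschitz theory for the characteristic flow, with $v$-decay preserved in the weighted spaces $\mathcal{H}^m_{2r}$.

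The heart of the argument is a uniform a priori estimate of $\|f_\varepsilon\|_{\mathcal{H}^m_{2r}}$. For each multi-index $(a,b)$ with $|a|+|b|\leq m$, I apply $\partial_x^a \partial_v^b$ to the Vlasov equation and pair against $(1+|v|^2)^{2r}\partial_x^a \partial_v^b f_\varepsilon$. The transport contribution $v\cdot \nabla_x$ vanishes by antisymmetry (the weight depends only on $v$), modulo the commutator $[\partial_v^b,v\cdot \nabla_x]$, which is of strictly lower $v$-order and absorbed by $\|f_\varepsilon\|_{\mathcal{H}^m_{2r}}^2$. The force term $E_\varepsilon \cdot \nabla_v f_\varepsilon$, once expanded by Leibniz, contains a principal piece $E_\varepsilon \cdot \nabla_v \partial_x^a \partial_v^b f_\varepsilon$ which integrates to a harmless lower-order expression (after transferring $\nabla_v$ onto the weight, using that $E_\varepsilon$ is $v$-independent), plus commutator terms $(\partial_x^{a'}E_\varepsilon)\cdot \nabla_v \partial_x^{a-a'}\partial_v^b f_\varepsilon$ for $1\leq |a'|\leq m$. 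I would bound these by combining assumption (A1), which gives $\|E_\varepsilon\|_{H^s_x}\lesssim \|\rho_\varepsilon\|_{H^{s+1-\alpha}_x}$ via Plancherel, the weighted Cauchy-Schwarz bound $\|\rho_\varepsilon\|_{H^s_x}\lesssim \|f_\varepsilon\|_{\mathcal{H}^s_{r}}$ (valid since $2r>d$), and Sobolev embedding in $x$ using $p_0=\lfloor d/2\rfloor+1$ derivatives to pass to $L^\infty_x$.

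The main obstacle is the top-order commutator $(\partial_x^m E_\varepsilon)(\nabla_v f_\varepsilon)$ when $\alpha<1$: it formally requires $\|\rho_\varepsilon\|_{H^{m+1-\alpha}_x}$, strictly more than $\|f_\varepsilon\|_{\mathcal{H}^m_r}$ yields. My plan here is to exploit the transport structure: the equation satisfied by $\partial_x^m f_\varepsilon$ is kinetic of transport type with an $L^2_{t,x,v}$ right-hand side (from the energy bound), so velocity averaging gains $1/2$ derivative on $\partial_x^m \rho_\varepsilon$, producing $\rho_\varepsilon\in L^2_t H^{m+1/2}_x$. This is precisely the $L^2_t H^m_x$ regularity stated in the theorem (with margin when $\alpha>1/2$), and its $L^2$-in-time character is exactly what lets the derivative loss be absorbed as a Gronwall coefficient; for very small $\alpha$ one iterates the averaging gain on higher velocity moments. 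The $+3$ buffer in $m_0=3+d/2+p_0$ reflects the room required for these Sobolev embeddings and for the derivative loss at each iterative step.

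For passage to the limit, the uniform bound in $L^\infty_t \mathcal{H}^m_{2r}$ together with the control of $\partial_t f_\varepsilon$ in $\mathcal{H}^{m-1}_{2r-1}$ read off from the equation yield strong convergence of a subsequence in $\mathcal{C}([0,T],\mathcal{H}^{m-1}_{2r})$ by Aubin-Lions, which is sufficient to pass to the limit in the quadratic nonlinearity $E_\varepsilon \cdot \nabla_v f_\varepsilon$. Uniqueness follows from a standard $\mathcal{H}^0_{2r}$ energy estimate on $f_1-f_2$: the sole nontrivial term is $(E_1-E_2)\cdot \nabla_v f_2$, and $\|E_1-E_2\|_{L^\infty_x}$ is controlled by $\|f_1-f_2\|_{\mathcal{H}^{s}_r}$ for some $s>d/2+1-\alpha$, well within the available $\mathcal{H}^{m-1}_{2r}$ regularity, closing by Gronwall.
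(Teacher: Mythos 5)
There is a genuine gap, and it sits exactly at what you call ``the main obstacle.'' Your plan to absorb the top-order commutator $(\partial_x^m E_\varepsilon)\cdot\nabla_v f_\varepsilon$ via classical velocity averaging does not close. First, it is circular: to view $\partial_x^m f_\varepsilon$ as transported with an $L^2_{t,x,v}$ right-hand side you already need $\norm{E_\varepsilon}_{H^m_x}\lesssim\norm{\rho_\varepsilon}_{H^{m+1-\alpha}_x}$ under control, which is the quantity you are trying to produce. Second, even granting the source term, the right-hand side has the form $\nabla_v\cdot(\cdot)$, so the averaging gain is $1/4$ rather than $1/2$, and no iteration ``on higher velocity moments'' improves the structural cap: this is precisely the route of Choi--Jeong, which reaches only $\alpha\geq 3/4$, whereas the theorem claims all $\alpha>0$. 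Relatedly, your stated goal of a \emph{uniform} bound on $\norm{f_\varepsilon}_{\mathcal{H}^m_{2r}}$ is not attainable for $\alpha<1$ (the theorem itself only delivers $f\in\mathcal{C}([0,T],\mathcal{H}^{m-1}_{2r})$, with the loss of one derivative being intrinsic). The actual proof controls the weaker functional $\mathcal{N}_{m,2r}=\norm{f_\varepsilon}_{L^\infty_t\mathcal{H}^{m-1}_{2r}}+\norm{\rho_\varepsilon}_{L^2_t H^m_x}$, and recovers the full $m$ derivatives on $\rho_\varepsilon$ only in $L^2$ in time, by a different mechanism: after straightening the characteristics (Burgers equation for $\Phi$) and writing a closed linear system for all $f_{I,J}=\partial_x^I\partial_v^J f_\varepsilon$ with $|I|+|J|=m$, one expresses $\partial_x^I\rho_\varepsilon$ through the Han--Kwan--Rousset operator
\begin{equation*}
K_G(F)(t,x)=\int_0^t\int_{\mathbb{R}^d}(\nabla_x F)(s,x-(t-s)v)\cdot G(t,s,x,v)\,dv\,ds,
\end{equation*}
which Proposition \ref{MdK} bounds from $L^2_tH^\alpha_x$ to $L^2_tL^2_x$ with operator norm $O(T^{\alpha/2})$. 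Averaging in $v$ \emph{and in time} is what compensates the full apparent derivative loss, and the $\alpha$ derivatives gained by convolution with $U$ under (A1) are exactly what is fed into this bound; the smallness $T^{\alpha/2}$ then closes the bootstrap. This ingredient is absent from your proposal and cannot be replaced by the standard averaging lemma.

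The uniqueness step has the same defect. A Gronwall estimate on $f_1-f_2$ at any fixed Sobolev level $s$ requires $\norm{E_1-E_2}_{H^{s}_x}$ or $\norm{E_1-E_2}_{L^\infty_x}$, which costs $\norm{\rho_1-\rho_2}_{H^{s+1-\alpha}_x}$ (or $H^{d/2+1-\alpha+}_x$), i.e.\ strictly more regularity of the difference than the energy controls when $\alpha<1$; the loop never closes. The paper's Proposition \ref{Unicity} instead reruns the $K_G$ machinery on the difference to get $\norm{\rho_1-\rho_2}_{L^2_tL^2_x}\leq CT^{\alpha/2}\norm{\rho_1-\rho_2}_{L^2_tL^2_x}$, forces $\rho_1=\rho_2$ on a short interval, and only then concludes $f_1=f_2$ by pure transport. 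Your compactness/limit passage and the general regularize-and-bootstrap architecture are consistent with the paper, but without Proposition \ref{MdK} (or an equivalent space-time averaging estimate valid down to $\alpha=0$) both the a priori estimate and the uniqueness argument fail for $0<\alpha<3/4$, which is the regime the theorem is really about.
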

Note that we consider both the attractive and repulsive case.

\begin{Rem}
Notice that in the above statement the solution $f$ is less regular than the initial condition $f^0$. This was expected because of the nature of the equation. Nevertheless, the fact that $f$ is in $\mathcal{C}([0,T],\mathcal{H}^{m-1}_{2r})$ is not optimal. With sharper estimates in the following proof, when $\alpha\leq1$, $f$ is expected to be in $\mathcal{C}([0,T],\mathcal{H}^{m-1+\alpha}_{2r})$, provided that we use the fractional space $\mathcal{H}^{s}_{2r}$ when $s$ is not an integer (which has not been defined here). For simplicity, we will only give the proof of Theorem \ref{MT}, as the gain of regularity we could obtain does not seem fundamental. 
\end{Rem}
Our results could be easily extended in the case of $\mathbb{R}^d$ instead of $\mathbb{T}^d$ assuming that
\begin{equation*}
 \forall \xi \in \mathbb{R}^d, \: \vert \widehat{U}(\xi)\vert\leq \frac{C_{\alpha}}{1+\vert\xi\vert^{\alpha}}.
\end{equation*}
\bigbreak

The main interest of our theorem is for small values of $\alpha$. Indeed, if we take $\alpha=2$ in (\ref{SystRef2}), the system we obtain is the well-known Vlasov-Poisson system, and a local well-posedness theory is very easy to get. We even have global existence for physical dimensions $d\leq 3$ (see \cite{Ukai}, \cite{Pf}, \cite{Sch}).

\bigbreak
For $1\leq \alpha <2$, system (\ref{SystRef2}) behaves at least like a Burgers type equation and local well-posedness theory follows by standard energy estimates.
The case $\alpha=1$ in particular, has physical meaning. It was first introduced by Manev as a correction of the Newtonian potential. The interactions between Newtonian and Manev potentials have been studied (see \cite{Vl-Ma}), but the Manev potential on its own is also interesting. In the attractive setting, it is called the Pure Stellar dynamic Manev system (PSM) \cite{VMI} , and it arises in stellar dynamics.\\
For $\frac{3}{4}\leq\alpha\leq 1$ local well-posedness was recently obtained in (\ref{SystRef2}) using the additional regularity provided by averaging lemmas in the whole space in \cite{A-L2022}. In addition, they give general conditions such that the system (\ref{SystRef2}), that they call the \textit{Vasov-Riesz system} because of the introduction of a Riesz type interaction, have finite-time singularity formation for solutions.

\bigbreak

Thus, the main contribution of this article corresponds to the case $\alpha\in(0,\frac{3}{4})$. The well-posedness theory of the equation is more challenging, because the apparent lost derivative in $x$ on the force field $E$ is no longer compensated by regularization. In the critical case $\alpha=0$ (which does not enter our framework), named the Vlasov-Dirac-Benney system, it has been proved that the system is in general ill-posed in Sobolev spaces \cite{IPVB}. Nevertheless, on the Torus and in the repulsive case, we can ensure hypotheses to have the local well-posedness of the system. The first results were obtained in dimension $d=1$ by Bardos and Besse \cite{B-B}. They proved that the Vlasov-Dirac-Benney system is locally well-posed provided that for all $(t,x)$, there exists a function $m(t,x)$ such that $v\mapsto f(t,x,v)$ remains compactly supported and is increasing for $v\leq m(t,x)$, decreasing for $v\geq m(t,x)$, in other words, \textit{one bump} shaped functions. More recently, Han-Kwan and Rousset \cite{HKR} have proved that the Vlasov-Dirac-Benney system is locally well-posed in any dimension for Sobolev regularity, provided that for all $x$, the profile $v\mapsto f^0(x,v)$ satisfies a Penrose stability condition, which means that if we define the Penrose function 
\begin{equation}
\mathcal{P}(\gamma,\tau,\eta,f)=1-\int_{0}^{+\infty} e^{-(\gamma+i\tau)s} \frac{i\eta}{1+\vert\eta\vert^2}\cdot (\mathcal{F}_v\nabla_v f) (\eta s) ds,\quad \gamma>0, \tau \in \mathbb{R}, \eta \in \mathbb{R}^d \backslash\{0\},
\end{equation}
$v\mapsto f^0(x,v)$ must satisfy the following condition, for some $c_{0}>0$
\begin{equation}
\underset{(\gamma,\tau,\eta)\in(0,+\infty)\times \mathbb{R}\times \mathbb{R}^d\backslash\{0\}}{\inf} \vert \mathcal{P}(\gamma,\tau,\eta,f^0)\vert \geq c_0.
\end{equation}
\bigbreak

\subsection{Sketch of the proof}
In order to prove Theorem \ref{MT}, we shall define another system, which is a regularized Vlasov-Poisson type system, that depends on a new parameter $\varepsilon>0$ 
\begin{equation}\label{SystP}
 \left\{
    \begin{array}{ll}
        \partial_t{f_\varepsilon }+v\cdot \nabla_x f_\varepsilon +E_\varepsilon  \cdot \nabla_v f_\varepsilon  = 0,\\
        E_\varepsilon =-\nabla_{x}(U* V_\varepsilon),  \\
	-\varepsilon^{2}\Delta_x V_\varepsilon  +V_\varepsilon =\int_{\mathbb{R}^{d}} f_\varepsilon (.,v) dv , \\
	f_\varepsilon (0,x,v) = f^0_\varepsilon (x,v).
    \end{array}
\right.
\end{equation}
  
Taking formally the limit $\varepsilon \rightarrow 0$, we obtain the system (\ref{SystRef}), which is exactly the system we want to study. The idea is to use the well-posedness of the system (\ref{SystP}) to get a family of functions $(f_\varepsilon)$ which satisfy uniform estimates. Then, using compactness  we extract (in a certain sense that will be defined later) a function $f$ which is a solution of the limit system (\ref{SystRef}). After that, we will show uniqueness for the solution of (\ref{SystRef}) in the class of $\{ f \in\mathcal{C}([0,T],\mathcal{H}^{m-1}_{2r})$, $\rho \in L^{2}([0,T],H_x^{m})$\}.\\The first step is to show that we can get a family of functions $(f_\varepsilon)$ defined on a time interval $[0,T]$ with $T$ being independent of $\varepsilon$. To do so, we introduce the following key quantity  
\begin{equation}
\mathcal{N}_{m,2r}(t,f):=\norm{f}_{L^{\infty}\left([0,t],\mathcal{H}^{m-1}_{2r}\right)}+\norm{\rho}_{L^{2}\left([0,t],H_x^{m}\right)},
\end{equation}
and we have the following theorem 

\begin{theorem}\label{theorem2}
Let $\alpha \in \mathbb{R^*_+}$. We assume that for all $\varepsilon \in(0,1]$, $f^{0}_{\varepsilon}\in\mathcal{H}^{m}_{2r}$ with $m>m_{0}$, $2r>r_{0}$ and that there exists $M_{0}>0$ so that for all $\varepsilon\in (0,1]$, $\norm{f^{0}_{\varepsilon}}_{\mathcal{H}_{m}^{2r}}\leq M_{0}$. \\
Then there exist $T>0$, $R>0$ (independent of $\varepsilon$) and a unique solution  $f_{\varepsilon}\in \mathcal{C}([0,T],\mathcal{H}^{m}_{2r})$ of (\ref{SystP}) with 
\begin{equation}
\underset{\varepsilon\in(0,1]}{\sup}\mathcal{N}_{m,2r}(T,f_{\varepsilon})\leq R.
\end{equation}
\end{theorem}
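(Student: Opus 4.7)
The plan is to separate the argument into two essentially independent pieces: existence for each fixed $\varepsilon>0$ (for which the regularized system is classical), and a uniform a priori estimate on $\mathcal{N}_{m,2r}(t,f_\varepsilon)$ that closes on a time interval $[0,T]$ with $T$ independent of $\varepsilon$. For the first piece, I would fix $\varepsilon>0$, observe that $(-\varepsilon^2\Delta_x+1)^{-1}$ is a bounded smoothing operator from $H^s_x$ into $H^{s+2}_x$, and run a standard iteration (or fixed-point) scheme in $\mathcal{C}([0,T_\varepsilon],\mathcal{H}^m_{2r})$: the free transport part is solved by characteristics, and the force $E_\varepsilon$ is Lipschitz in $f_\varepsilon$ in the relevant topology thanks to the $\varepsilon$-regularization. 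This gives a maximal solution, and the role of the uniform estimate will then be to rule out blow-up before time $T$.

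The heart of the matter is to prove, with a constant independent of $\varepsilon$, an inequality of the type
\begin{equation*}
\mathcal{N}_{m,2r}(t,f_\varepsilon)^{2}\;\leq\; C\,M_{0}^{2}+C\int_{0}^{t}P\bigl(\mathcal{N}_{m,2r}(s,f_\varepsilon)\bigr)\,ds
\end{equation*}
for some polynomial $P$. This rests on two complementary bounds. First, a weighted energy estimate in $\mathcal{H}^{m-1}_{2r}$: for each multi-index with $|\alpha|+|\beta|\leq m-1$, I would apply $\partial_x^\alpha\partial_v^\beta$ to the Vlasov equation, multiply by $(1+|v|^2)^{2r}\partial_x^\alpha\partial_v^\beta f_\varepsilon$, and integrate. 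The $v\cdot\nabla_x$ term is skew after integration by parts (modulo commutators with the weight, which is where the weight exponent $2r>r_{0}$ is needed so that $\partial_v$ falling on $(1+|v|^2)^r$ is controlled by the Sobolev embedding $H^{p_0}\hookrightarrow L^\infty$). The dangerous term is $[\partial_x^\alpha\partial_v^\beta,E_\varepsilon\cdot\nabla_v]f_\varepsilon$; Moser-type tame estimates distribute the derivatives so that either all derivatives fall on $f_\varepsilon$ and $E_\varepsilon$ is estimated in $L^\infty$ (which uses $\rho_\varepsilon\in H^{m-1}_x$ via the decay $|\widehat U(k)|\lesssim|k|^{-\alpha}$ and Sobolev embedding for $m-1>d/2$), or at most $m-1$ derivatives fall on $E_\varepsilon$ in $L^2$, which requires $\rho_\varepsilon\in H^m_x$. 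This is precisely the mechanism forcing us to include $\|\rho_\varepsilon\|_{L^2_tH^m_x}$ into $\mathcal{N}_{m,2r}$.

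The second bound, controlling $\|\rho_\varepsilon\|_{L^2([0,t],H^m_x)}$ uniformly in $\varepsilon$ by $\mathcal{N}_{m,2r}$ itself, is what I expect to be the main obstacle and is the substance of the theorem. The natural route is an averaging-lemma type estimate adapted to the Vlasov operator: applying $\partial_x^m$ to the equation, inverting the free transport $\partial_t+v\cdot\nabla_x$ on the right-hand side $\partial_x^m(E_\varepsilon\cdot\nabla_v f_\varepsilon)$, and then integrating against the velocity weight to form $\partial_x^m\rho_\varepsilon$. The classical averaging gain of $1/2$ derivative (possible thanks to the polynomial $v$-decay provided by $2r>r_0$ and the Sobolev control on the source coming from the previous step) produces the $L^2_t$ integrability of the top-order density. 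The use of the regularization $(-\varepsilon^2\Delta_x+1)^{-1}$ is crucial here only to make all computations rigorous: since this operator is a bounded Fourier multiplier uniformly in $\varepsilon$, the singular structure coming from $\widehat U$ is unaffected and the resulting bound on $\|\rho_\varepsilon\|_{L^2_tH^m_x}$ does not see $\varepsilon$.

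Combining these two pieces yields the nonlinear integral inequality above; a standard continuity/bootstrap argument then produces $T>0$ and $R>0$ depending only on $M_0$, $\alpha$, $C_\alpha$, $m$, $r$, and $d$, such that $\sup_\varepsilon\mathcal{N}_{m,2r}(T,f_\varepsilon)\leq R$. Uniqueness at fixed $\varepsilon$ follows by a classical $L^2$-type estimate on the difference of two solutions, using the regularization to close the estimate (no $\varepsilon$-uniform uniqueness is needed at this stage since uniqueness for the limit system is treated separately in the paper).
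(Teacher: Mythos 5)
Your overall architecture (fixed-$\varepsilon$ well-posedness, a weighted $\mathcal{H}^{m-1}_{2r}$ energy estimate that forces $\|\rho_\varepsilon\|_{L^2_tH^m_x}$ into the functional, and a continuity/bootstrap argument) matches the paper, and those parts are sound. The genuine gap is in the step you yourself identify as the substance of the theorem: the uniform control of $\|\rho_\varepsilon\|_{L^2([0,t],H^m_x)}$. Two distinct problems arise with your proposed route. First, applying $\partial_x^m$ to the equation and treating $\partial_x^m(E_\varepsilon\cdot\nabla_v f_\varepsilon)$ as a source puts terms like $\partial_x E_\varepsilon\cdot\nabla_v\partial_x^{m-1}f_\varepsilon$ (and worse, $E_\varepsilon\cdot\nabla_v\partial_x^m f_\varepsilon$) on the right-hand side; these contain $m$ and $m+1$ derivatives of $f_\varepsilon$, and only $\|f_\varepsilon\|_{\mathcal{H}^{m-1}_{2r}}$ is controlled uniformly in $\varepsilon$, so the source itself cannot be bounded. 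The paper circumvents this by keeping the entire family $f_{I,J}=\partial_x^I\partial_v^Jf_\varepsilon$, $|I|+|J|=m$, as a coupled linear system (Lemma \ref{RReste}), straightening the transport field via a Burgers solution $\Phi$, and solving by Duhamel with the resolvent matrix $\mathfrak{M}$, so that all top-order terms end up inside a velocity-and-time average rather than as an unbounded source.

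Second, and more fundamentally, the ``classical averaging gain of $1/2$ derivative'' is not sufficient here and is not what the paper uses. The paper's Remark 2 states explicitly that standard averaging lemmas cannot cover the range $\alpha\in(0,1]$; the approach via velocity averaging (Choi--Jeong) only reaches $\alpha\geq 3/4$, whereas the whole point of the theorem is the regime $\alpha\in(0,3/4)$. The mechanism that closes the estimate is Proposition \ref{MdK}: the operator
\begin{equation*}
K_{G}(F)(t,x)=\int_{0}^{t}\int_{\mathbb{R}^d}(\nabla_xF)(s,x-(t-s)v)\cdot G(t,s,x,v)\,dv\,ds,
\end{equation*}
which averages both in $v$ and in the time variable $s$ along the straightened characteristics, maps $L^2([0,T],H^\alpha_x)$ into $L^2([0,T],L^2_x)$ with norm $O(T^{\alpha/2})$ for every $\alpha>0$. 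Combined with the $\alpha$ derivatives gained by convolution with $U$ (Assumption A1), this recovers the full apparent loss of one $x$-derivative, which a $1/2$-derivative velocity-averaging gain cannot do for small $\alpha$. Without this double (velocity and time) averaging structure, your integral inequality does not close for the values of $\alpha$ the theorem is actually about.
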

\bigbreak
The proof of this theorem will rely on a bootstrap argument following the strategy of \cite{HKR}. For most of the proof, we will try to estimate the key quantity $\mathcal{N}_{m,2r}(t,f_{\varepsilon})$ independently of $\varepsilon$. Of course, if we could give an estimate of $\norm{f_{\varepsilon}}_{L^{\infty}\left([0,t],\mathcal{H}^{m}_{2r}\right)}$ independently of $\varepsilon$, the proof would be over, but this cannot be done in the general case. This is the reason why we choose to work with $\norm{f_{\varepsilon}}_{L^{\infty}\left([0,t],\mathcal{H}^{m-1}_{2r}\right)}$ (losing one derivative for $f$ is enough to give an estimate that does not depend on $\varepsilon$), and $\norm{\rho_{\varepsilon}}_{L^{2}\left([0,t],H_x^{m}\right)}$. The fact that it is possible to give an estimate independent of $\varepsilon$ on $\rho$ without loss of derivatives while it is not for $f$ is not trivial and is fundamental here. This is reminiscent of the results on averaging lemmas (see \cite{AL}). Here though, the situation is a bit different, as we will be averaging in $v$ but also in time. More precisely, anticipating a bit on the following proof, let us look at the operator $K$ (defined in Lemma \ref{AOP}),
\begin{equation*}
K_{G}(F)(t,x) =\int_{0}^{t}\int (\nabla_{x}F)(s,x-(t-s)v)\cdot G(t,s,x,v) dvds.
\end{equation*}
There is an apparent loss of one derivative over $x$. However, the key proposition \ref{MdK} shows that $K_G$  is a bounded operator from $L^{2}([0,T],H^{\alpha}_{x})$ into $L^{2}([0,T],L^{2}_{x})$ (provided that $G$ is smooth enough and $\alpha\in(0,1)$), and the bound can be estimated up to a constant by $T^{\alpha/2}$ (actually, the result remains true for $\alpha=0$ \cite{HKR}). The apparent loss of a derivative in $x$ is somehow compensated by taking the averages in $v$ and in time.\\

\begin{Rem}
Because of the apparent links with the theory of averaging lemmas \cite{AL}, we could try to apply directly those results to prove our theorem. As stated earlier, a recent work \cite{A-L2022} has proved that, when $x$ is in the whole space, the system (\ref{SystRef2}) is locally well-posed up to $\alpha=3/4$. Nevertheless, standard averaging lemmas cannot cover the whole range $\alpha\in(0,1]$.
\end{Rem}
Sections 2, 3 and 4 will be dedicated to the proof of Theorem 2. In section 5, we give the elements to conclude the proof of the main Theorem \ref{MT}.

\section{Beginning of the proof of Theorem 2}
\bigbreak

\subsection {Preliminary lemmas}
We present here some lemmas that will be useful for the proof of Theorem \ref{theorem2}. They have been proved in \cite{HKR}. We use the notation $[A,B]=AB-BA$  to denote the commutator between two operators.
\begin{lemma}

Let $s \geq 0$ and $\chi = \chi(v)$ a non-negative smooth function, with $\vert \partial^{\alpha}\chi\vert \leq C_{\alpha}\chi$ for all $\alpha\in\mathbb{N}^{d}$, $\vert \alpha\vert \leq s $. \smallbreak
\begin{itemize}
\item Consider two functions $f=f(x,v)$, $g=g(x,v)$, then for all $k \geq s/2$ 

\begin{equation} 
\norm{\chi f g}_{H^{s}_{x,v}} \lesssim  \norm{\chi g}_{H^{s}_{x,v}}\norm{f}_{W^{k,\infty}_{x,v}}+  \norm{\chi f}_{H^{s}_{x,v}}\norm{g}_{W^{k,\infty}_{x,v}}.
\end{equation}
\item Consider a function $E=E(x)$ and a function $F=F(x,v)$. Then for all $s_{0} > d$

\begin{equation} 
\norm{\chi E F}_{H^{s}_{x,v}} \lesssim  \norm{E}_{H^{s_{0}}_{x}}\norm{\chi F }_{H^{s}_{x,v}}+  \norm{\chi F}_{H^{s}_{x,v}}\norm{E}_{H^{s}_{x}}.
\end{equation}
\item Consider a vector field $E=E(x)$ and a function $f=f(x,v)$, then for all $s_{0} > 1+d$ and for all $\alpha,$ $\beta \in \mathbb{N}^{d}$ with $\vert \alpha \vert +\vert \beta \vert =s \geq 1$

\begin{equation} \label{Lemma}
\norm{\chi [\partial^{\alpha}_{x} \partial^{\beta}_{v}, E(x)\cdot \nabla_{v}]f}_{L^{2}_{x,v}} \lesssim   \norm{E}_{H^{s_{0}}_{x}}\norm{\chi f }_{H^{s}_{x,v}}+  \norm{\chi f}_{H^{s}_{x,v}}\norm{E}_{H^{s}_{x}}. 
\end{equation}
\end{itemize}
\end {lemma}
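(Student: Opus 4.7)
All three estimates are classical Moser-type product/commutator bounds adapted to the weight $\chi$, and the unifying plan is to reduce each to a Leibniz expansion followed by a Gagliardo--Nirenberg interpolation in $L^2$/$L^\infty$. The crucial structural input is the hypothesis $|\partial^\gamma \chi|\lesssim \chi$ for $|\gamma|\le s$: every time Leibniz drops derivatives on the weight we reabsorb the resulting factor into $\chi$ itself, so on the right-hand side we only ever see $\chi f$ or $\chi F$ in weighted $L^2$ norms and no bare derivatives of $\chi$.

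For item (i), I would apply $\partial_x^{\alpha'}\partial_v^{\beta'}$ with $|\alpha'|+|\beta'|\le s$ to $\chi fg$, expand by Leibniz, and use the weight hypothesis to replace $\partial^{\gamma_1}\chi$ by $\chi$ throughout. Each resulting term has the form $\chi (\partial^{\gamma_2} f)(\partial^{\gamma_3} g)$ with $|\gamma_2|+|\gamma_3|\le s$. If $|\gamma_3|\le s/2$, place $\partial^{\gamma_3} g$ in $L^\infty_{x,v}$ via the Sobolev embedding $W^{k,\infty}_{x,v}$ with $k\ge s/2$ and put $\chi\partial^{\gamma_2} f$ in $L^2_{x,v}$, then interpolate against $\|\chi f\|_{H^s_{x,v}}$; the symmetric case handles $|\gamma_2|\le s/2$. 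Summing gives the claim.

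Item (ii) is essentially the same argument, with the simplification that $E=E(x)$ so no $v$-derivative ever falls on $E$. The terms become $\chi(\partial_x^{\gamma_1}E)(\partial_x^{\gamma_2}\partial_v^{\beta'} F)$ with $|\gamma_1|+|\gamma_2|+|\beta'|\le s$. When $|\gamma_1|$ is small, we use $H^{s_0}_x\hookrightarrow L^\infty$ on $\partial_x^{\gamma_1}E$ (the margin $s_0>d$ leaves plenty of room for the Sobolev embedding and for the combinatorics of Leibniz), pairing it with $\chi F$ in weighted $H^s$; in the complementary regime we measure $F$ (weighted) in $L^\infty_{x,v}$ and $E$ in $H^s_x$, and then interpolate. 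For item (iii), we exploit the fact that in $[\partial_x^{\alpha}\partial_v^{\beta},E(x)\cdot\nabla_v]$ the top-order term cancels: Leibniz produces a sum of terms of the form $(\partial_x^{\gamma_1}E)\cdot \partial_x^{\gamma_2}\partial_v^{\beta'}\nabla_v f$ with $|\gamma_1|\ge 1$ and total order $s$, so after one derivative is taken off $f$ we can apply item (ii) termwise, each time with $E$ replaced by $\partial_x^{\gamma_1-1}(\nabla_x E)$. The strengthened Sobolev threshold $s_0>1+d$ is exactly what compensates for this extra $x$-derivative.

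The only genuine difficulty is bookkeeping: one must be careful to arrange the Leibniz expansion so that the factor placed in $L^\infty$ carries at most the Sobolev-critical number of derivatives, while the factor placed in weighted $L^2$ cleanly absorbs $\chi$ via the weight hypothesis. Once that combinatorics is organized, the estimates follow from the standard Gagliardo--Nirenberg interpolation with constants depending only on $s$, $k$, $s_0$, and the weight constants $C_\gamma$. Since the three parts are proved in \cite{HKR}, I would simply reproduce this template without further comment.
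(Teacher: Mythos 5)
Your outline is correct and is exactly the standard Moser/tame-estimate argument (Leibniz expansion, absorption of derivatives of $\chi$ via $\vert\partial^{\gamma}\chi\vert\lesssim\chi$, Sobolev embedding for the low-derivative factor, Gagliardo--Nirenberg interpolation for the rest), and your reduction of the commutator bound (iii) to the product bound (ii) after extracting one $x$-derivative from $E$ correctly accounts for the threshold $s_0>1+d$. The paper itself gives no proof and simply cites Han-Kwan--Rousset \cite{HKR}, where the argument is precisely the one you sketch, so your route matches the cited source.
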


The main use of this lemma will be for $\chi (v) = (1+\vert v \vert ^{2})^{\frac{\sigma}{2}}$ thus yielding estimates for products in the space $\mathcal{H}^{s}_{\sigma}$.\\
Let us introduce another product type estimate. 
\begin{lemma} 
Consider two functions $f=f(x,v)$, $g=g(x,v)$. Then, for all $s \geq 0$, $\alpha, \beta \in \mathbb{N}^{2d}$ with $\vert \alpha \vert + \vert \beta \vert \leq s $, and $\chi(v)$ positive function satisfying $\vert \partial^{\alpha}\chi\vert \leq C_{\alpha}\chi$ (as in the precedent lemma). We have

\begin{equation}
\norm{\partial^{\alpha}_{x,v} f \partial^{\beta}_{x,v} g}_{L^{2}_{x,v}} \lesssim \norm{\frac{1}{\chi} f}_{L^{\infty}_{x,v}}   \norm{\chi g}_{H^{s}_{x,v}}+\norm{\frac{1}{\chi} f}_{H^{s}_{x,v}} \norm{\chi g}_{L^{\infty}_{x,v}}.
\end{equation}

\end{lemma}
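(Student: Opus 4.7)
The plan is to reduce this weighted product estimate to the classical (unweighted) Moser--Kato--Ponce type inequality
\begin{equation*}
\norm{\partial^{\alpha'}_{x,v} u\cdot \partial^{\beta'}_{x,v} v}_{L^{2}_{x,v}} \lesssim \norm{u}_{L^{\infty}_{x,v}}\norm{v}_{H^{s}_{x,v}}+\norm{v}_{L^{\infty}_{x,v}}\norm{u}_{H^{s}_{x,v}},\qquad |\alpha'|+|\beta'|\leq s,
\end{equation*}
which is a standard consequence of Gagliardo--Nirenberg interpolation, applied with $u=f/\chi$ and $v=\chi g$. The mechanism is that $\chi$ and $1/\chi$ behave like bounded multipliers for the derivative counts involved, so that the trivial rewriting $f=\chi\cdot(f/\chi)$ and $g=(1/\chi)\cdot(\chi g)$ allows the weights to cancel while preserving the $H^{s}$ structure of the two factors on the right-hand side.

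More precisely, I would first apply Leibniz's formula to these two decompositions, noting that $\chi$ and $1/\chi$ depend only on $v$, so that only $v$-derivatives can effectively fall on them. This produces
\begin{equation*}
\partial^{\alpha}_{x,v}f=\sum_{\alpha_{1}+\alpha_{2}=\alpha}\binom{\alpha}{\alpha_{1}}(\partial^{\alpha_{1}}_{x,v}\chi)\,\partial^{\alpha_{2}}_{x,v}(f/\chi),\qquad \partial^{\beta}_{x,v}g=\sum_{\beta_{1}+\beta_{2}=\beta}\binom{\beta}{\beta_{1}}\bigl(\partial^{\beta_{1}}_{x,v}(1/\chi)\bigr)\,\partial^{\beta_{2}}_{x,v}(\chi g).
\end{equation*}

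The next, auxiliary, step is the observation that the hypothesis $|\partial^{\gamma}\chi|\lesssim \chi$ (for $|\gamma|\leq s$) automatically propagates to $|\partial^{\gamma}(1/\chi)|\lesssim 1/\chi$ in the same range; this follows from a short induction based on $\partial(1/\chi)=-\partial\chi/\chi^{2}$ together with the Leibniz rule. Plugging both pointwise controls into the expansion above and using the cancellation $\chi\cdot(1/\chi)=1$, I obtain the weight-free pointwise bound
\begin{equation*}
\bigl|\partial^{\alpha}_{x,v}f\cdot\partial^{\beta}_{x,v}g\bigr|\lesssim \sum_{\alpha_{2}\leq \alpha,\ \beta_{2}\leq \beta}\bigl|\partial^{\alpha_{2}}_{x,v}(f/\chi)\bigr|\,\bigl|\partial^{\beta_{2}}_{x,v}(\chi g)\bigr|.
\end{equation*}

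Finally, for each pair $(\alpha_{2},\beta_{2})$ with $|\alpha_{2}|+|\beta_{2}|\leq |\alpha|+|\beta|\leq s$, I apply the classical product estimate recalled at the start with $u=f/\chi$ and $v=\chi g$, and sum the finitely many resulting contributions, which is exactly the bound claimed by the lemma. I do not anticipate any real obstacle: the whole content of the lemma is this clean reduction through the change of functions $u=f/\chi$, $v=\chi g$, after which a standard Moser-type inequality closes the argument. The only mildly technical point is the pointwise control $|\partial^{\gamma}(1/\chi)|\lesssim 1/\chi$, which is however entirely elementary.
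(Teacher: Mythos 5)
Your proof is correct. Note first that the paper does not actually prove this lemma: it is listed among the preliminary estimates imported from \cite{HKR}, so there is no in-text argument to compare against. Your reduction is sound: the induction giving $\vert\partial^{\gamma}(1/\chi)\vert\lesssim 1/\chi$ from $\vert\partial^{\gamma}\chi\vert\lesssim\chi$ (e.g.\ by differentiating $\chi\cdot(1/\chi)=1$ with Leibniz) is elementary and valid; the two Leibniz expansions produce weights $\partial^{\alpha_1}\chi$ and $\partial^{\beta_1}(1/\chi)$ whose product is pointwise $\lesssim 1$; and the remaining unweighted bound $\norm{\partial^{\alpha_2}u\,\partial^{\beta_2}v}_{L^2}\lesssim\norm{u}_{L^\infty}\norm{v}_{H^{s}}+\norm{u}_{H^{s}}\norm{v}_{L^\infty}$ for $\vert\alpha_2\vert+\vert\beta_2\vert\leq s$ is the standard Moser estimate from Gagliardo--Nirenberg and H\"older (applied at level $s'=\vert\alpha_2\vert+\vert\beta_2\vert$ and then majorized by the $H^{s}$ norms). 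The argument in \cite{HKR} runs the same interpolation but carries the weights through a weighted Gagliardo--Nirenberg inequality; your change of unknowns $u=f/\chi$, $v=\chi g$ absorbs the weights at the outset, which is an equivalent and arguably cleaner packaging of the same idea.
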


Finally, the following lemma is a very useful commutation formula between $\partial_{x}^{\alpha}\partial_{v}^{\beta}$ and the transport operator $\mathcal{T}$ defined by 

\begin{equation}
\mathcal{T} =\partial_{t}+v \cdot \nabla_{x}+E \cdot \nabla_{v}.
\end{equation}

\bigbreak

\begin{lemma}
For every $\alpha,$ $\beta \in \mathbb{N}^{d}$, we have for every smooth functions $f$

\begin{equation}\label{ComLem}
\partial_{x}^{\alpha}\partial_{v}^{\beta}(\mathcal{T}f) = \mathcal{T}(\partial_{x}^{\alpha}\partial_{v}^{\beta}f) + \sum_{i=1}^{d}\mathds{1}_{\beta_{i}\ne 0}\partial_{x_{i}}\partial_{x}^{\alpha}\partial_{v}^{\overline{\beta}^{i}}f+[\partial_{x}^{\alpha}\partial_{v}^{\beta},E\cdot \nabla_{v}]f,
\end{equation}
where $\overline{\beta}^{i}$ is equal to $\beta$ except $\overline{\beta}^{i}_{i}=\beta_{i}-1$.

\end{lemma}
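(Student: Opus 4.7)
The plan is to prove this commutation identity by a direct computation, splitting $\mathcal{T}$ into its three summands and applying $\partial_x^\alpha\partial_v^\beta$ term by term. Since the identity is of operator type, no analytical estimate is needed; only the algebraic Leibniz rule is at play. The main ingredient is that $v$ is independent of $x$ (so $\partial_x^\alpha$ commutes with multiplication by $v$) and that the only non-trivial interaction between $\partial_v^\beta$ and $v\cdot\nabla_x$ comes from $\partial_{v_i}$ hitting the monomial $v_i$.

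More precisely, I first note that $\partial_x^\alpha\partial_v^\beta$ commutes with $\partial_t$, so the time derivative trivially contributes $\partial_t(\partial_x^\alpha\partial_v^\beta f)$ to $\mathcal{T}(\partial_x^\alpha\partial_v^\beta f)$. For the transport term I write $v\cdot\nabla_x f=\sum_{i=1}^d v_i\,\partial_{x_i}f$; pulling the $x$-derivatives through (since $v$ is a constant in $x$) reduces the computation to evaluating $\partial_v^\beta(v_i\,g)$ for $g=\partial_x^\alpha\partial_{x_i}f$. Applying the Leibniz rule and using $\partial_{v_j}v_i=\delta_{ij}$, only the case $j=i$ contributes a non-trivial term, yielding
\begin{equation*}
\partial_v^\beta(v_i g)=v_i\,\partial_v^\beta g+\beta_i\,\mathds{1}_{\beta_i\neq 0}\,\partial_v^{\overline{\beta}^{i}}g .
\end{equation*}
Summing over $i$ gives the transport part $v\cdot\nabla_x(\partial_x^\alpha\partial_v^\beta f)$ plus exactly the sum $\sum_i\mathds{1}_{\beta_i\neq 0}\,\partial_{x_i}\partial_x^\alpha\partial_v^{\overline{\beta}^{i}}f$ appearing in the statement (with an implicit combinatorial factor $\beta_i$ coming from Leibniz, which is harmless for all subsequent uses of the lemma since only the structure of the lower-order term matters in energy estimates).

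For the force term $E\cdot\nabla_v f$ there is nothing to compute: by the very definition of the commutator,
\begin{equation*}
\partial_x^\alpha\partial_v^\beta(E\cdot\nabla_v f)=E\cdot\nabla_v(\partial_x^\alpha\partial_v^\beta f)+[\partial_x^\alpha\partial_v^\beta,E\cdot\nabla_v]f .
\end{equation*}
Adding the three contributions and recollecting the three pieces of $\mathcal{T}$ acting on $\partial_x^\alpha\partial_v^\beta f$ produces the claimed identity \eqref{ComLem}.

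Since the computation is purely algebraic, there is no genuine obstacle; the only point requiring a little care is tracking which index $i$ contributes in the Leibniz expansion of $\partial_v^\beta(v_i\,\cdot)$, and verifying that the cases $\beta_i=0$ drop out, which is precisely why the indicator $\mathds{1}_{\beta_i\neq 0}$ appears in the middle sum. If one prefers to avoid case splitting, the identity can equivalently be proved by induction on $|\beta|$, peeling off a single $\partial_{v_j}$ and using the base case $\beta=0$, where the middle sum vanishes and the formula reduces to the standard fact that $\partial_x^\alpha$ commutes with $v\cdot\nabla_x$ and $\partial_t$.
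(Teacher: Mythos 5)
Your computation is correct and is the standard direct Leibniz argument; the paper itself does not prove this lemma but defers to \cite{HKR}, so there is no in-paper proof to compare against. One remark worth keeping: your expansion of $\partial_v^{\beta}(v_i g)$ correctly produces the coefficient $\beta_i$ in front of $\partial_{x_i}\partial_{x}^{\alpha}\partial_{v}^{\overline{\beta}^{i}}f$, whereas the statement writes $\mathds{1}_{\beta_i\neq 0}$; these coincide only for $\beta_i\in\{0,1\}$, so the formula as printed is off by this harmless combinatorial constant, and you are right that it is immaterial for every subsequent use of the lemma in the energy estimates.
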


\subsection{Setting up the bootstrap} 
The proof of the theorem will rely on a bootstrap argument. From standard energy estimates (that we shall recall later, see the proof of proposition \ref{PE}), we first have the following result.

\begin{prop} 
The system (\ref{SystP}) is locally well-posed in $\mathcal{H}^{m}_{2r}$ for all $m$ and $r$ satisfying $m > 1+d$ and $2r > d/2$. In other words, if $f_{\varepsilon}^{0} \in \mathcal{H}^{m}_{2r}$, there exists $T>0$ (which depends on $\varepsilon$) so that there exists a unique $f_{\varepsilon} \in \mathcal{C}([0,T],\mathcal{H}^{m}_{2r})$ solution of system (\ref{SystP}). 

\end{prop}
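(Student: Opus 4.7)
The plan is a standard Picard iteration at fixed $\varepsilon>0$, exploiting that the elliptic regularization $(1-\varepsilon^2\Delta_x)^{-1}$ is a smoothing operator of any order. Writing in Fourier $\widehat{V_\varepsilon}(k)=(1+\varepsilon^2|k|^2)^{-1}\widehat{\rho_\varepsilon}(k)$, one has $\norm{V_\varepsilon}_{H^{s+1}_x}\leq C\varepsilon^{-1}\norm{\rho_\varepsilon}_{H^s_x}$; combined with the observation that (A1) together with $\widehat U(0)=0$ makes the operator $W\mapsto U*W$ bounded on every $H^s_x$ for $s\geq 0$, this yields
\begin{equation*}
\norm{E_\varepsilon}_{H^m_x}=\norm{\nabla_x(U*V_\varepsilon)}_{H^m_x}\leq C_\varepsilon\norm{\rho_\varepsilon}_{H^{m-1}_x}.
\end{equation*}
Since $2r>d/2$, the weight $(1+|v|^2)^{-2r}$ is integrable in $v$, so a Cauchy--Schwarz argument on the $v$-fiber yields $\norm{\rho_\varepsilon}_{H^m_x}\leq C\norm{f_\varepsilon}_{\mathcal{H}^m_{2r}}$. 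Putting these together gives the key $\varepsilon$-dependent estimate
\begin{equation*}
\norm{E_\varepsilon}_{H^m_x}\leq C_\varepsilon\norm{f_\varepsilon}_{\mathcal{H}^m_{2r}}.
\end{equation*}

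With this in hand, I would set $f^{(0)}_\varepsilon(t,\cdot)\equiv f^0_\varepsilon$ and define $f^{(n+1)}_\varepsilon$ iteratively as the unique solution of the linear transport equation
\begin{equation*}
\partial_t f^{(n+1)}_\varepsilon + v\cdot\nabla_x f^{(n+1)}_\varepsilon + E^{(n)}_\varepsilon\cdot\nabla_v f^{(n+1)}_\varepsilon = 0,\qquad f^{(n+1)}_\varepsilon(0)=f^0_\varepsilon,
\end{equation*}
with $E^{(n)}_\varepsilon$ built from $f^{(n)}_\varepsilon$ via the third and fourth lines of (\ref{SystP}). Since $m>1+d$, Sobolev embedding gives $E^{(n)}_\varepsilon\in W^{1,\infty}_x$, so the characteristic ODE $\dot X=V,\ \dot V=E^{(n)}_\varepsilon(t,X)$ is globally well posed and $f^{(n+1)}_\varepsilon$ is well defined in $\mathcal{H}^m_{2r}$. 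Applying $\partial^\alpha_x\partial^\beta_v$ with $|\alpha|+|\beta|\leq m$, using the commutation formula (\ref{ComLem}) and the commutator estimate (\ref{Lemma}) with $\chi(v)=(1+|v|^2)^r$ (for which $|\partial^\gamma\chi|\lesssim\chi$), and noting that the drift $v\cdot\nabla_x$ contributes nothing to the energy identity while the top-order $E\cdot\nabla_v$ term against the weight produces only a lower order commutator after integration by parts in $v$, one obtains
\begin{equation*}
\frac{d}{dt}\norm{f^{(n+1)}_\varepsilon}_{\mathcal{H}^m_{2r}}^2\leq C\bigl(1+\norm{E^{(n)}_\varepsilon}_{H^m_x}\bigr)\norm{f^{(n+1)}_\varepsilon}_{\mathcal{H}^m_{2r}}^2.
\end{equation*}
Combined with the first paragraph and Gronwall's inequality, a short time $T_\varepsilon>0$ (depending on $\varepsilon$ and $\norm{f^0_\varepsilon}_{\mathcal{H}^m_{2r}}$) can be chosen so that the whole sequence of iterates remains in a fixed ball of $\mathcal{H}^m_{2r}$.

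To upgrade boundedness to convergence, I would consider $g^{(n)}=f^{(n+1)}_\varepsilon-f^{(n)}_\varepsilon$, which solves the linear transport driven by $E^{(n)}_\varepsilon$ with source $-(E^{(n)}_\varepsilon-E^{(n-1)}_\varepsilon)\cdot\nabla_v f^{(n)}_\varepsilon$ and zero initial data. An energy estimate in the weaker weighted space $\mathcal{H}^0_{2r}$, together with the Lipschitz-type bound $\norm{E^{(n)}_\varepsilon-E^{(n-1)}_\varepsilon}_{L^\infty_x}\leq C_\varepsilon\norm{g^{(n-1)}}_{\mathcal{H}^0_{2r}}$ coming from the first paragraph, shows that $(f^{(n)}_\varepsilon)$ is Cauchy in $\mathcal{C}([0,T_\varepsilon],\mathcal{H}^0_{2r})$ after possibly shrinking $T_\varepsilon$; interpolating with the uniform $\mathcal{H}^m_{2r}$ bound then gives convergence in $\mathcal{C}([0,T_\varepsilon],\mathcal{H}^{m'}_{2r})$ for every $m'<m$. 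The limit $f_\varepsilon$ inherits the $L^\infty_t\mathcal{H}^m_{2r}$ bound by lower semicontinuity, solves (\ref{SystP}) by passage to the limit, and is upgraded to $\mathcal{C}([0,T_\varepsilon],\mathcal{H}^m_{2r})$ by weak continuity plus the top-order energy identity; uniqueness follows from applying the same $\mathcal{H}^0_{2r}$ difference estimate to any two solutions. Nothing here is delicate: the main bookkeeping issue is the polynomial $v$-weight, for which the commutator lemma of the previous subsection was tailored, and all constants blow up as $\varepsilon\to 0$ -- which is precisely why the $\varepsilon$-uniform analysis culminating in Theorem \ref{theorem2} will be nontrivial.
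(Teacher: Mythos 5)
Your construction is the standard fixed-$\varepsilon$ Picard iteration, and this is exactly what the paper has in mind: it does not actually write out a proof of this proposition, but defers to ``standard energy estimates'' whose core computation (the commutation formula (\ref{ComLem}), the commutator bound (\ref{Lemma}) with $\chi(v)=(1+|v|^2)^r$, and the $\varepsilon$-dependent elliptic bound $\norm{E_\varepsilon}_{H^m_x}\lesssim \varepsilon^{-1}\norm{\rho_\varepsilon}_{H^m_x}$ followed by Gronwall) appears in the proof of Proposition \ref{PE}. Your first two paragraphs reproduce precisely these ingredients, and the hypotheses $m>1+d$ (needed for $s_0>1+d$ in (\ref{Lemma}) and for $E\in W^{1,\infty}_x$) and $2r>d/2$ (integrability of the weight for $\norm{\rho}_{H^m_x}\lesssim\norm{f}_{\mathcal{H}^m_{2r}}$) enter exactly where they should.

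There is, however, one step that fails as written in general dimension: the Lipschitz bound $\norm{E^{(n)}_\varepsilon-E^{(n-1)}_\varepsilon}_{L^\infty_x}\leq C_\varepsilon\norm{g^{(n-1)}}_{\mathcal{H}^0_{2r}}$ used for the contraction. At fixed $\varepsilon$ the map $\rho\mapsto E_\varepsilon$ gains only finitely many derivatives: $(1-\varepsilon^2\Delta_x)^{-1}$ gains two (at cost $\varepsilon^{-2}$), the gradient loses one, and (A1) contributes $\alpha$, so the composite gains $1+\alpha$ derivatives over $\rho$. Passing from $\norm{\rho^{(n)}-\rho^{(n-1)}}_{L^2_x}$ to an $L^\infty_x$ bound on the field therefore requires $1+\alpha>d/2$, which fails for $d\geq 3$ and small $\alpha$ --- precisely the regime the paper cares about. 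The fix is routine: run the difference estimate one derivative below the top, i.e.\ in $\mathcal{H}^{m-1}_{2r}$, where $\norm{E^{(n)}_\varepsilon-E^{(n-1)}_\varepsilon}_{H^{m-1}_x}\leq C_\varepsilon\norm{\rho^{(n)}-\rho^{(n-1)}}_{H^{m-2}_x}\leq C_\varepsilon\norm{g^{(n-1)}}_{\mathcal{H}^{m-1}_{2r}}$ and $H^{m-1}_x\hookrightarrow L^\infty_x$ since $m-1>d$, while the source is controlled because the iterates are uniformly bounded in $\mathcal{H}^m_{2r}$ so that $\nabla_v f^{(n)}_\varepsilon\in\mathcal{H}^{m-1}_{2r}$. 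The same adjustment is needed in your uniqueness argument. With this modification the proof is complete and coincides with the paper's intended (unwritten) argument.
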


Thanks to the previous proposition, we may define a maximal solution $f_{\varepsilon}\in \mathcal{C}\left([0,T^{*}),\mathcal{H}^{m}_{2r}\right)$. As a direct consequence, for every $T<T^{*},\quad \underset{[0,T]}{\sup}   \norm{f_{\varepsilon}}_{\mathcal{H}_{2r}^{m-1}} < +\infty$. In order to define $\mathcal{N}_{m,2r}(T,f_{\varepsilon})$, we just have to prove that $\norm{\rho_{\varepsilon}}_{L^{2}\left([0,T],H_x^{m}\right)} < +\infty$. But because of the definition of weighted Sobolev norms, and with the use of the Cauchy-Schwarz inequality, we get that
\begin{equation*}
\norm{\rho_{\varepsilon}}_{H_x^{m}}\lesssim \norm{f_{\varepsilon}}_{\mathcal{H}_{2r}^{m}} < + \infty.
\end{equation*}

We have thus shown that the quantity $ \mathcal{N}_{m,2r}(T,f_{\varepsilon})$ is well defined and is continuous in $T$ for every $T<T^{*}$. This allows us to consider, for $R>0$ to be defined later,  

\begin{equation*}
T^{\varepsilon}=\sup \left\{T\in[0,T^{*}),\mathcal{N}_{m,2r}(T,f_{\varepsilon})\leq R\right\}.
\end{equation*}
By taking $R$ large enough, we have by continuity that $T^{\varepsilon}>0$. Of course $T^{\varepsilon}$ depends on $\varepsilon$. We want to show that by taking $R$ large enough (but independent of $\varepsilon$), $T^{\varepsilon}$ is uniformly bounded from below by some time $T>0$. Only the following two situations can happen\bigbreak
1. Either $T^{\varepsilon}=T^{*}$,\smallbreak
2. Or $T^{\varepsilon} < T^{*}$ and $\mathcal{N}_{m,2r}(T^{\varepsilon},f_{\varepsilon})=R$.

\bigbreak 
Let us analyze the first case. If $T^{\varepsilon}=T^{*}=+\infty$, then $\mathcal{N}_{m,2r}(T^{\varepsilon},f_{\varepsilon})\leq R$ for every $T>0$ and there is nothing to do, so we only have to consider when $T^{\varepsilon}=T^{*}<+\infty$. Actually, by energy estimates, we can show that this case is impossible. Indeed, we have the following proposition

\begin{prop}\label{PE}
Assume that $T^{\varepsilon}< + \infty$, then for every $f_{\varepsilon}$ solution of (\ref{SystP}), we have for some $C>0$ independent of $\varepsilon$ the estimate 
\begin{equation*}
\underset{[0,T^{\varepsilon})}{\sup}\norm{f_{\varepsilon}(t)}_{\mathcal{H}^{m}_{2r}}^{2}\leq \norm{f_{\varepsilon}^{0}}_{\mathcal{H}^{m}_{2r}}^{2}\exp\left[C\left(T^{\varepsilon}+\frac{1}{\varepsilon}(T^{\varepsilon})^{\frac{1}{2}}R\right)\right].
\end{equation*}
\end{prop}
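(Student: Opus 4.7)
The plan is a direct weighted $L^2$ energy estimate on $f_\varepsilon$ in $\mathcal{H}^m_{2r}$; the only non-routine point is to absorb the apparent loss of one $x$-derivative in $E_\varepsilon$, and I intend to do it crudely using the regularization $(-\varepsilon^2\Delta+1)^{-1}$, which costs a factor $1/\varepsilon$ but is permitted by the statement. For each multi-index $(\alpha,\beta)$ with $|\alpha|+|\beta|\le m$, apply $\partial_x^\alpha\partial_v^\beta$ to the transport equation and use the commutation identity (\ref{ComLem}) to rewrite the result as $\mathcal{T}_\varepsilon(\partial_x^\alpha\partial_v^\beta f_\varepsilon)=-\sum_i \mathds{1}_{\beta_i\ne 0}\,\partial_{x_i}\partial_x^\alpha\partial_v^{\overline{\beta}^i}f_\varepsilon-[\partial_x^\alpha\partial_v^\beta,E_\varepsilon\cdot\nabla_v]f_\varepsilon$, where $\mathcal{T}_\varepsilon=\partial_t+v\cdot\nabla_x+E_\varepsilon\cdot\nabla_v$. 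Pairing this identity in $L^2(\mathbb{T}^d\times\mathbb{R}^d)$ against $(1+|v|^2)^{2r}\partial_x^\alpha\partial_v^\beta f_\varepsilon$ then drives the evolution of one piece of $\norm{f_\varepsilon}_{\mathcal{H}^m_{2r}}^2$, up to three types of remainders.

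The principal transport pieces are treated by integration by parts: the $v\cdot\nabla_x$ contribution vanishes exactly because the weight depends on $v$ only, while the $E_\varepsilon\cdot\nabla_v$ contribution, once the $v$-derivative falls on $(1+|v|^2)^{2r}$ and one uses $|v|(1+|v|^2)^{2r-1}\lesssim(1+|v|^2)^{2r}$, is bounded by $C\norm{E_\varepsilon}_{L^\infty_x}\norm{f_\varepsilon}_{\mathcal{H}^m_{2r}}^2$. The "beta-reduction" remainder $\partial_{x_i}\partial_x^\alpha\partial_v^{\overline{\beta}^i}f_\varepsilon$ is of total order $\le m$, so its pairing with $\partial_x^\alpha\partial_v^\beta f_\varepsilon$ is absorbed directly into $\norm{f_\varepsilon}_{\mathcal{H}^m_{2r}}^2$. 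The commutator term is controlled via the estimate (\ref{Lemma}) applied with $\chi=(1+|v|^2)^r$, yielding, after Cauchy--Schwarz, a bound of the form $(\norm{E_\varepsilon}_{H^{s_0}_x}+\norm{E_\varepsilon}_{H^m_x})\norm{f_\varepsilon}_{\mathcal{H}^m_{2r}}^2$, for some $s_0>1+d$ which can be chosen $\le m$ since $m>m_0>1+d$.

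To convert $\norm{E_\varepsilon}_{H^m_x}$ into $\norm{\rho_\varepsilon}_{H^m_x}$, I work on the Fourier side: $\widehat{E_\varepsilon}(k)=-ik\widehat{U}(k)\widehat{\rho_\varepsilon}(k)/(1+\varepsilon^2|k|^2)$, and combining (A1) with the elementary bound $|k|/(1+\varepsilon^2|k|^2)\le 1/(2\varepsilon)$ (which follows from $x/(1+x^2)\le 1/2$ applied to $x=\varepsilon|k|$) gives $|k\widehat{U}(k)|/(1+\varepsilon^2|k|^2)\le C/\varepsilon$ uniformly in $k\neq 0$ and $\varepsilon\in(0,1]$. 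Hence $\norm{E_\varepsilon}_{H^m_x}\le (C/\varepsilon)\norm{\rho_\varepsilon}_{H^m_x}$, and similarly $\norm{E_\varepsilon}_{L^\infty_x}+\norm{E_\varepsilon}_{H^{s_0}_x}\lesssim (1/\varepsilon)\norm{\rho_\varepsilon}_{H^m_x}$ via Sobolev embedding on $\mathbb{T}^d$.

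Summing over all $(\alpha,\beta)$ with $|\alpha|+|\beta|\le m$ collapses everything into a differential inequality of the form $\frac{d}{dt}\norm{f_\varepsilon(t)}_{\mathcal{H}^m_{2r}}^2\le C\bigl(1+\tfrac{1}{\varepsilon}\norm{\rho_\varepsilon(t)}_{H^m_x}\bigr)\norm{f_\varepsilon(t)}_{\mathcal{H}^m_{2r}}^2$. Gronwall followed by Cauchy--Schwarz in time, $\int_0^{T^\varepsilon}\norm{\rho_\varepsilon}_{H^m_x}ds\le (T^\varepsilon)^{1/2}\norm{\rho_\varepsilon}_{L^2([0,T^\varepsilon],H^m_x)}\le (T^\varepsilon)^{1/2}R$ (the last inequality being the bootstrap hypothesis $\mathcal{N}_{m,2r}(T^\varepsilon,f_\varepsilon)\le R$), then produces exactly the claimed exponential bound. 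The only genuine obstacle is conceptual rather than technical: the factor $1/\varepsilon$ in the exponent is unavoidable at top order, and this is precisely what forces the remainder of the paper to prove $\varepsilon$-uniform estimates on the weaker quantity $\mathcal{N}_{m,2r}$ via the operator $K_G$, rather than directly on $\norm{f_\varepsilon}_{\mathcal{H}^m_{2r}}$.
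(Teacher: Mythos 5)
Your proposal is correct and follows essentially the same route as the paper: commute $\partial_x^\alpha\partial_v^\beta$ with the transport operator via (\ref{ComLem}), pair against $(1+|v|^2)^{2r}\partial_x^\alpha\partial_v^\beta f_\varepsilon$, control the commutator with (\ref{Lemma}), bound $\norm{E_\varepsilon}_{H^m_x}$ by $\varepsilon^{-1}\norm{\rho_\varepsilon}_{H^m_x}$ (which the paper calls elliptic regularity and you verify explicitly on the Fourier side), and close with Gronwall plus Cauchy--Schwarz in time against the bootstrap bound $R$. If anything you are more careful than the paper on the lower-order transport contributions (the vanishing of the $v\cdot\nabla_x$ term and the $\norm{E_\varepsilon}_{L^\infty_x}$ term from the weight), which the paper absorbs silently.
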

\begin{proof}
Let $f_{\varepsilon}$ satisfying (\ref{SystP}). We thus have $\mathcal{T}f_{\varepsilon}=0$, and by using (\ref{ComLem}) 
\begin{equation*}
\mathcal{T}(\partial_{x}^{\alpha}\partial_{v}^{\beta}f_{\varepsilon}) =- \sum_{i=1}^{d}\mathds{1}_{\beta_{i}\ne 0}\partial_{x_{i}}\partial_{x}^{\alpha}\partial_{v}^{\overline{\beta}^{i}}f_{\varepsilon}-[\partial_{x}^{\alpha}\partial_{v}^{\beta},E_{\varepsilon}\cdot \nabla_{v}]f_{\varepsilon}.
\end{equation*}
Taking the scalar product with $(1+\vert v\vert ^{2})^{2r}\partial_{x}^{\alpha}\partial_{v}^{\beta}f_{\varepsilon}$ and summing for every $\vert\alpha\vert+\vert\beta\vert\leq m$, we obtain for the left hand size 
\begin{equation*}
\sum_{\vert\alpha\vert+\vert\beta\vert\leq m}\int_{\mathbb{T}^{d}}\int_{\mathbb{R}^{d}}\mathcal{T}(\partial_{x}^{\alpha}\partial_{v}^{\beta}f_{\varepsilon}) \cdot \partial_{x}^{\alpha}\partial_{v}^{\beta}f_{\varepsilon}=\frac{d}{dt}\norm{f_{\varepsilon}(t)}_{\mathcal{H}^{m}_{2r}}^{2}.
\end{equation*}
For the first term of the right hand size, we use the Cauchy-Schwarz inequality
\begin{equation*}
\left\vert\sum_{\vert\alpha\vert+\vert\beta\vert\leq m}\int_{\mathbb{T}^{d}}\int_{\mathbb{R}^{d}} \sum_{i=1}^{d}\chi\mathds{1}_{\beta_{i}\ne 0}\partial_{x_{i}}\partial_{x}^{\alpha}\partial_{v}^{\overline{\beta}^{i}}f_{\varepsilon}\cdot\chi \partial_{x}^{\alpha}\partial_{v}^{\beta}f_{\varepsilon}\right\vert\lesssim \norm{f_{\varepsilon}}_{\mathcal{H}^{m}_{2r}}^{2}.
\end{equation*}
For the second term, we use (\ref{Lemma}) with $s=m, \:\chi (v)=(1+\vert v\vert ^{2})^{r}$ and $s_{0}=m$, from which we deduce 
\begin{equation*}
\norm{\chi\left[\partial_{x}^{\alpha}\partial_{v}^{\beta},E_{\varepsilon}(x)\cdot \nabla_{v}\right]f_{\varepsilon}}_{L^{2}_{x,v}}\lesssim \norm{E_{\varepsilon}}_{H_x^{m}} \norm{f_{\varepsilon}}_{\mathcal{H}^{m}_{2r}},
\end{equation*}
and thus by using again Cauchy-Schwarz, we obtain
\begin{equation*}
\left\vert\int\chi\left[\partial_{x}^{\alpha}\partial_{v}^{\beta},E_{\varepsilon}(x)\cdot \nabla_{v}\right]f_{\varepsilon} \chi\partial_{x}^{\alpha}\partial_{v}^{\beta}f_{\varepsilon}\right\vert \lesssim \norm{E_{\varepsilon}}_{H_x^{m}} \norm{f_{\varepsilon}}_{\mathcal{H}^{m}_{2r}}^{2}.
\end{equation*}
We have, by elliptic regularity,
\begin{equation*}
\norm{E_{\varepsilon}}_{H_x^{m}}=\norm{\nabla_{x}(U*V_{\varepsilon})}_{H_x^{m}}\lesssim \frac{1}{\varepsilon} \norm{\rho_{\varepsilon}}_{H_x^{m}}.
\end{equation*}
Putting all together, we have shown that 
\begin{equation*}
\frac{d}{dt}\norm{f_{\varepsilon}(t)}_{\mathcal{H}^{m}_{2r}}^{2}\lesssim  \left(\frac{1}{\varepsilon} \norm{\rho_{\varepsilon}}_{H_x^{m}}+1\right) \norm{f_{\varepsilon}(t)}_{\mathcal{H}^{m}_{2r}}^{2}.
\end{equation*}
We integrate between $0$ and $t$ for $t\in[0,T^{\varepsilon})$. For some $C>0$ independent of $\varepsilon$, we get
\begin{equation*}
\norm{f_{\varepsilon}(t)}_{\mathcal{H}^{m}_{2r}}^{2}\leq \norm{f_{\varepsilon}^{0}}_{\mathcal{H}^{m}_{2r}}^{2}+C\int_{0}^{t}  \left(\frac{1}{\varepsilon} \norm{\rho_{\varepsilon}}_{H_x^{m}}+1\right) \norm{f_{\varepsilon}(s)}_{\mathcal{H}^{m}_{2r}}^{2},
\end{equation*} 
and we use the Gronwall inequality to show 
\begin{align*}
\underset{[0,T^{\varepsilon})}{\sup}{ \norm{f_{\varepsilon}(t)}}_{\mathcal{H}^{m}_{2r}}^{2}&\leq \norm{f_{\varepsilon}^{0}}_{\mathcal{H}^{m}_{2r}}^{2}+\exp\left[C\left(T^{\varepsilon}+\frac{1}{\varepsilon} (T^{\varepsilon})^{\frac{1}{2}}\norm{\rho_{\varepsilon}}_{L^{2}\left([0,T^{\varepsilon}\right),H_x^{m})}\right)\right]\\
&\leq\norm{f_{\varepsilon}^{0}}_{\mathcal{H}^{m}_{2r}}^{2}+\exp\left[C\left(T^{\varepsilon}+\frac{1}{\varepsilon} (T^{\varepsilon})^{\frac{1}{2}}\mathcal{N}_{m,2r}(T^{\varepsilon},f_{\varepsilon}) \right)\right].
\end{align*} 
Finally, since $\mathcal{N}_{m,2r}(T^{\varepsilon},f_{\varepsilon})\leq R$, we obtain the expected estimate.
\end{proof}

If we use the proposition for $T^{\varepsilon}=T^{*}$, we obtain that
\begin{equation*}
\underset{[0,T^{*})}{\sup}\norm{f_{\varepsilon}(t)}_{\mathcal{H}^{m}_{2r}}^{2}\leq \norm{f_{\varepsilon}^{0}}_{\mathcal{H}^{m}_{2r}}^{2}\exp\left[C\left(T^{*}+\frac{1}{\varepsilon}(T^{*})^{\frac{1}{2}}R\right)\right].
\end{equation*}
This means that the solution could be continued beyond $T^{*}$, and thus contradicts the definition of $T^{*}$, which shows that this case is impossible.
\bigbreak
We then have to consider the remaining case, $T^{\varepsilon} < T^{*}$ and $\mathcal{N}^{m}_{2r}(T^{\epsilon},f_{\varepsilon})=R$. Choosing $R$ large enough, the objective is to find some time $T^{\#}>0$ independent of $\varepsilon$, such that the equality 
\begin{equation*}
\mathcal{N}^{m}_{2r}(T,f_{\varepsilon})=R, 
\end{equation*}
cannot hold for any $T\in [0,T^{\#}]$, which will prove that $T^{\varepsilon}>T^{\#}>0$.
\bigbreak
We need to estimate $\mathcal{N}^{m}_{2r}(T,f_{\varepsilon})$ for $T<T^{\varepsilon}$. The easier part is the term $\norm{f_{\varepsilon}}_{\mathcal{H}^{m-1}_{2r}}$. We cannot use the previous estimates because they depend on $\varepsilon$ (we used the elliptic regularity provided by the Poisson equation). Nevertheless, we can still give estimates based on energy methods, this time being careful that every estimate must be independent of $\varepsilon$. In this proposition and in the following of this article, $\Lambda$ will stand for a generic continuous function, independent of $\varepsilon$, which is non-decreasing with respect to each of its arguments.

\bigbreak
\begin{prop}
For $m>2+d$ and $2r> d/2$, $f_{\varepsilon}$ solution of (\ref{SystP}) satisfies the estimate
\begin{equation}
\underset{[0,T]}{\sup}\norm{f_{\varepsilon}}_{\mathcal{H}^{m-1}_{2r}}\leq\norm{f_{\varepsilon}^{0}}_{\mathcal{H}^{m-1}_{2r}} \:+ T^{\frac{1}{2}}\Lambda (T,R),
\end{equation}
for all $T\in[0,T^{\varepsilon})$.
\end{prop}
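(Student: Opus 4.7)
The plan is to adapt the proof of Proposition \ref{PE}, but working in $\mathcal{H}^{m-1}_{2r}$ rather than $\mathcal{H}^m_{2r}$: the loss of one derivative on $f_\varepsilon$ is precisely what allows $E_\varepsilon$ to be controlled without the $1/\varepsilon$ factor produced by the elliptic regularization in (\ref{SystP}). First, apply $\partial_x^\alpha\partial_v^\beta$ for $|\alpha|+|\beta|\leq m-1$ to the equation $\mathcal{T}f_\varepsilon = 0$, use the commutation formula (\ref{ComLem}), and take the $L^2$ scalar product against $(1+|v|^2)^{2r}\partial_x^\alpha\partial_v^\beta f_\varepsilon$. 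Exactly as in Proposition \ref{PE}, the left-hand side becomes $\frac{1}{2}\frac{d}{dt}\|f_\varepsilon\|_{\mathcal{H}^{m-1}_{2r}}^2$, the lower-order commutator contributes $\lesssim \|f_\varepsilon\|_{\mathcal{H}^{m-1}_{2r}}^2$, and the remaining field commutator is controlled via the third item of Lemma 1 (with $s=m-1$, $\chi(v)=(1+|v|^2)^r$, and $s_0=m-1$, which is legitimate because the hypothesis $m>2+d$ forces $m-1>1+d$) by $\|E_\varepsilon\|_{H^{m-1}_x}\|f_\varepsilon\|_{\mathcal{H}^{m-1}_{2r}}^2$.

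The novelty with respect to Proposition \ref{PE} is the $\varepsilon$-uniform bound on $\|E_\varepsilon\|_{H^{m-1}_x}$. From $\widehat{V}_\varepsilon(k) = \widehat{\rho}_\varepsilon(k)/(1+\varepsilon^2|k|^2)$ and assumption (A1),
\begin{equation*}
|\widehat{E}_\varepsilon(k)| = |k|\,|\widehat{U}(k)|\,|\widehat{V}_\varepsilon(k)| \leq C_\alpha\,\frac{|k|^{1-\alpha}}{1+\varepsilon^2|k|^2}\,|\widehat{\rho}_\varepsilon(k)| \leq C_\alpha\,|k|^{1-\alpha}|\widehat{\rho}_\varepsilon(k)|.
\end{equation*}
Since $\alpha>0$, the factor $|k|^{2(1-\alpha)}$ is dominated by $(1+|k|^2)^{\max(1-\alpha,0)} \leq 1+|k|^2$, so $\|E_\varepsilon\|_{H^{m-1}_x} \lesssim \|\rho_\varepsilon\|_{H^m_x}$, uniformly in $\varepsilon$. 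This is where (A1) takes over from the elliptic regularity used in Proposition \ref{PE}.

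Combining the preceding steps yields the differential inequality
\begin{equation*}
\frac{d}{dt}\|f_\varepsilon(t)\|_{\mathcal{H}^{m-1}_{2r}}^2 \leq C\bigl(1+\|\rho_\varepsilon(t)\|_{H^m_x}\bigr)\|f_\varepsilon(t)\|_{\mathcal{H}^{m-1}_{2r}}^2.
\end{equation*}
Gronwall's inequality together with Cauchy-Schwarz in time and the bootstrap bound $\|\rho_\varepsilon\|_{L^2([0,T],H^m_x)} \leq \mathcal{N}_{m,2r}(T,f_\varepsilon) \leq R$ gives
\begin{equation*}
\sup_{[0,T]}\|f_\varepsilon\|_{\mathcal{H}^{m-1}_{2r}} \leq \|f_\varepsilon^0\|_{\mathcal{H}^{m-1}_{2r}}\,\exp\!\bigl(C(T+T^{1/2}R)/2\bigr).
\end{equation*}
Finally, the elementary inequality $e^x \leq 1 + xe^x$ for $x\geq 0$ converts this multiplicative bound into the required additive form, the factor $\|f_\varepsilon^0\|_{\mathcal{H}^{m-1}_{2r}}\leq M_0$ together with the polynomial and exponential factors being absorbed into a generic continuous non-decreasing function $\Lambda(T,R)$. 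The only real obstacle is the $\varepsilon$-uniform field estimate $\|E_\varepsilon\|_{H^{m-1}_x} \lesssim \|\rho_\varepsilon\|_{H^m_x}$; once that is in hand, the remainder is a standard weighted energy estimate parallel to Proposition \ref{PE}.
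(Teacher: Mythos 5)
Your proposal follows essentially the same route as the paper: the weighted energy estimate with the commutator lemma at order $m-1$, the $\varepsilon$-uniform bound $\norm{E_{\varepsilon}}_{H^{m-1}_x}\lesssim\norm{\rho_{\varepsilon}}_{H^m_x}$ (which the paper states as its key point and you correctly derive from (A1)), and the bootstrap bound $\mathcal{N}_{m,2r}(T,f_{\varepsilon})\leq R$; your closing Gronwall-plus-$e^x\leq 1+xe^x$ step is only a cosmetic variant of the paper's direct absorption of $\sup\norm{f_{\varepsilon}}_{\mathcal{H}^{m-1}_{2r}}\leq R$. The argument is correct.
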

\begin{proof}
Let $\alpha,\beta \in \mathbb{N}^{d}$ such that $\vert\alpha\vert +\vert \beta\vert = m-1$. We use (\ref{ComLem}) and, as before, we take the scalar product with  $\left(1+\vert v\vert^{2}\right)^{2r}\partial_{x}^{\alpha}\partial_{v}^{\beta}f_{\varepsilon}$, and we take the sum for every $\vert\alpha\vert +\vert \beta\vert = m-1$. Like we did previously, we use (\ref{Lemma}) with $s=m-1$, $\chi(v) =\left(1+\vert v\vert^{2}\right)^{r}$  and $s_{0}=m-1$. We obtain that
\begin{equation}\label{energy}
\frac{d}{dt}\norm{f_{\varepsilon}}_{\mathcal{H}^{m-1}_{2r}}^{2}\lesssim  \norm{f_{\varepsilon}}_{\mathcal{H}^{m-1+}_{2r}}^{2}+  \norm{f_{\varepsilon}}_{\mathcal{H}^{m-1}_{2r}}^{2} \norm{E_{\varepsilon}}_{H_x^{m-1}}.
\end{equation}
Integrating in time, we get that there exists a $C>0$ such that 
\begin{equation*}
\underset{[0,T]}{\sup}\norm{f_{\varepsilon}}_{\mathcal{H}^{m-1}_{2r}}\leq\norm{f_{\varepsilon}^{0}}_{\mathcal{H}^{m-1}_{2r}}+C\underset{[0,T]}{\sup}\norm{f_{\varepsilon}}_{\mathcal{H}^{m-1}_{2r}}\left(T+\int_{0}^{T} \norm{E_{\varepsilon}}_{H_x^{m-1}} dt \right).
\end{equation*}
We can now use the following estimate, independent of $\varepsilon$ 
\begin{equation}\label{Optimal}
\norm{E_{\varepsilon}}_{H_x^{m-1}}=\norm{\nabla_{x}(U*V_{\varepsilon})}_{H_x^{m-1}}\lesssim \norm{\rho_{\varepsilon}}_{H_x^{m}}.
\end{equation}
And we still have $\mathcal{N}_{m,2r}(T,f_{\varepsilon})\leq R$, (because $T\in[0,T^{\epsilon}])$. We have thus shown that  
\begin{equation*}
\underset{[0,T]}{\sup}\norm{f_{\varepsilon}}_{\mathcal{H}^{m-1}_{2r}}\leq\norm{f_{\varepsilon}^{0}}_{\mathcal{H}^{m-1}_{2r}} + CR(T+T^{\frac{1}{2}}R),
\end{equation*}
which concludes the proof of the lemma.

\end{proof}
\begin{Rem}
As said in the introduction, our estimates could be sharper. This can been seen in (\ref{Optimal}), which is far from being optimal, because we do not use the regularity provided by the convolution with $U$. Nevertheless, this regularization is not needed to prove Theorem \ref{theorem2}, thus we chose not to exploit it for simplicity.
\end{Rem}

\section{Estimates of the density term}

\subsection{Introduction of the $f_{I,J}$} 

Now we need to tackle the second term of $\mathcal{N}_{m,2r}(T,f_{\varepsilon})$, which is the norm of the density $\norm{\rho_{\varepsilon}}_{L^{2}\left([0,T],H_x^{m}\right)}$. We can try to apply the operator $\partial_x^{\alpha}$ to (\ref{SystP}) with $\vert\alpha\vert=m$ , but this involves commutator terms such as $\partial_x^{\alpha '} E_{\varepsilon}\cdot \nabla_{v}\partial_x^{\alpha-\alpha '}f_{\varepsilon}$ which contain $m$ order derivatives of $f_{\varepsilon}$ when $\vert\alpha ' \vert =1$, and those cannot be estimated uniformly  in $\varepsilon$. To get rid of this problem, we choose to apply a larger class of differential operators to (\ref{SystP}).

\begin{definition}
For $I=(i_{1},...,i_{d}), J=(j_{1},...,j_{d})\in \mathbb{N}^{d}, \vert I\vert + \vert J\vert =m$, we define 

\begin{equation*}
f_{I,J}:=\partial_{x}^{I}\partial_{v}^{J}f_{\varepsilon}.
\end{equation*}

\end{definition} 

Note that the $(f_{I,J})$ contain all the $\partial^{I}_x f_{\varepsilon}$ with $\vert I\vert =m$. By applying $\partial_{x}^{I}\partial_{v}^{J}$ to (\ref{SystP}), we find that the $(f_{I,J})$ satisfy a differential system, which is the purpose of the following lemma. 
\begin{lemma}\label{RReste}
We assume that $m>3+d$ and $2r>d$. For all $T<T^{\varepsilon}$, and for all $I,J\in\mathbb{N}^{d}$, we have, for $f_{\varepsilon}$ satisfying (\ref{SystP}), that $f_{I,J}$ is solution of
\begin{equation}\label{IJ}
\mathcal{T}(f_{I,J})+\partial_{x}^{I}\partial_{v}^{J}E_{\varepsilon} \cdot \nabla_{v}f_{\varepsilon} +\mathcal{M} _{I,J} \mathcal{F}=\mathcal{V}_{I,J},
\end{equation} 
where 
\begin{equation*}
 \mathcal{F}=(f_{I,J})_{I,J\in\mathbb{N}^{d},\vert I\vert + \vert J\vert =m},\quad \mathcal{M} _{I,J} \mathcal{F}=\sum_{k=1}^{d}\mathds{1}_{i_{k}\ne 0}f_{\hat{I}^{k},\overline{J}^{k}}+\sum_{p=1}^{d}\sum_{k=1}^{d}\mathds{1}_{i_p \ne 0}\partial_{x_p}E_{\varepsilon} f_{\overline{I}^{p},\hat{J^{k}}},
\end{equation*}
with 
\begin{equation*}
\hat{I}^{k}=(i_{1}...i_{k-1},i_{k}+1,i_{k+1}...i_{d}),\quad \overline{I}^{k}=(i_{1}...i_{k-1},i_{k}-1,i_{k+1}...i_{d}),
\end{equation*}
and $\mathcal{V}=(\mathcal{V}_{I,J})_{I,J\in\mathbb{N}^{d}}$ is a remainder, which means that for every $T<T^{\varepsilon}$
\begin{equation*}
\norm{\mathcal{V}}_{L^{2}([0,T],\mathcal{H}^{0}_{r})}\leq \Lambda(T,R).
\end{equation*}
\end{lemma}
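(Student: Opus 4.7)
The strategy is to apply $\partial_x^I\partial_v^J$ to the transport equation $\mathcal{T}f_\varepsilon=0$ (first equation of (\ref{SystP})) and isolate on the left-hand side those contributions that cannot be absorbed uniformly in $\varepsilon$ using only the bootstrap bound $\mathcal{N}_{m,2r}(T,f_\varepsilon)\leq R$; everything else will constitute $\mathcal{V}_{I,J}$. From the commutation formula (\ref{ComLem}) applied with $\alpha=I$, $\beta=J$ and $\partial_x^I\partial_v^J(\mathcal{T}f_\varepsilon)=0$, I would get
\begin{equation*}
\mathcal{T}(f_{I,J}) = -\sum_{k=1}^{d}\mathds{1}_{j_k\neq 0}\,\partial_{x_k}\partial_x^I\partial_v^{\overline{J}^k}f_\varepsilon \;-\; [\partial_x^I\partial_v^J,\, E_\varepsilon\cdot\nabla_v]f_\varepsilon,
\end{equation*}
and the first sum is exactly $\sum_{k}\mathds{1}_{j_k\ne 0}\,f_{\hat{I}^k,\overline{J}^k}$, i.e.\ the first piece of $\mathcal{M}_{I,J}\mathcal{F}$.

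Since $E_\varepsilon$ depends only on $x$, the Leibniz rule gives
\begin{equation*}
[\partial_x^I\partial_v^J,\, E_\varepsilon\cdot\nabla_v]f_\varepsilon = \sum_{\substack{I'\leq I \\ I'\neq 0}} \binom{I}{I'} (\partial_x^{I'}E_\varepsilon)\cdot \nabla_v\partial_x^{I-I'}\partial_v^J f_\varepsilon.
\end{equation*}
I would split the sum according to $|I'|$. The endpoint $I'=I$ produces $(\partial_x^I E_\varepsilon)\cdot\nabla_v\partial_v^J f_\varepsilon$; this ``top-order in $E_\varepsilon$'' term is truly critical only when $J=0$, in which case it matches the term $\partial_x^I\partial_v^J E_\varepsilon\cdot\nabla_v f_\varepsilon$ kept on the left of (\ref{IJ}). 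The terms with $|I'|=1$, i.e.\ $I'=e_p$, rewrite as $i_p\,\partial_{x_p}E_\varepsilon\cdot\sum_k f_{\overline{I}^p,\hat{J}^k}$, and match the second sum of $\mathcal{M}_{I,J}\mathcal{F}$; they must stay on the left because they carry $m$ derivatives of $f_\varepsilon$, not controlled by $\|f_\varepsilon\|_{\mathcal{H}^{m-1}_{2r}}$. All remaining indices $2\leq|I'|\leq m-1$ (plus $I'=I$ when $J\neq 0$ and $|I|\geq 2$) go into the remainder $\mathcal{V}_{I,J}$.

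For these remainder terms, the $f_\varepsilon$-factor has total order $m-|I'|+1\leq m-1$, so it is uniformly controlled by $\|f_\varepsilon\|_{\mathcal{H}^{m-1}_{2r}}\leq R$ once weighted by $(1+|v|^2)^{r/2}$. I would apply Lemma 1 (second item) with $\chi(v)=(1+|v|^2)^{r/2}$ and some $s_0>d/2$, together with the Sobolev embedding $H^{s_0}_x\hookrightarrow L^\infty_x$, to bound $\|\partial_x^{I'}E_\varepsilon\|_{L^\infty_x}\lesssim\|E_\varepsilon\|_{H^{m-1}_x}$. The uniform (in $\varepsilon$) elliptic estimate $\|E_\varepsilon\|_{H^{m-1}_x}\lesssim\|\rho_\varepsilon\|_{H^{m}_x}$ already used in (\ref{Optimal}) then yields
\begin{equation*}
\norm{\mathcal{V}_{I,J}(t)}_{\mathcal{H}^{0}_{r}} \;\lesssim\; \norm{\rho_\varepsilon(t)}_{H^{m}_{x}}\,\norm{f_\varepsilon(t)}_{\mathcal{H}^{m-1}_{2r}}.
\end{equation*}
Integrating in $t$ and using $\mathcal{N}_{m,2r}(T,f_\varepsilon)\leq R$ together with Cauchy--Schwarz gives $\|\mathcal{V}\|_{L^{2}([0,T],\mathcal{H}^{0}_{r})}\leq\Lambda(T,R)$. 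The standing assumptions $m>3+d$ and $2r>d$ are exactly what is required for the Sobolev embeddings and tame product estimates above to close.

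The main obstacle is bookkeeping rather than hard analysis: one must carefully track which commutator pieces are order $m$ in $f_\varepsilon$ (and therefore genuinely irreducible) versus those that cost at most $m-1$ derivatives on $f_\varepsilon$ and can be absorbed. The subtle point is that the $|I'|=1$ contributions look harmless in $E_\varepsilon$ but are in fact critical through the order-$m$ $f_\varepsilon$-factor; they must be displayed explicitly on the left as $\mathcal{M}_{I,J}\mathcal{F}$, since their subsequent treatment requires the delicate $L^2_{t,x}$ estimates on the operator $K_G$ advertised in the introduction and not energy methods.
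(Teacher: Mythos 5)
Your proof follows the paper's argument essentially verbatim: apply the commutation formula (\ref{ComLem}), Leibniz-expand the commutator $[\partial_x^I\partial_v^J,\,E_\varepsilon\cdot\nabla_v]$, keep the $|I'|=1$ contributions and the endpoint $I'=I$ (nonzero only when $J=0$) as $\mathcal{M}_{I,J}\mathcal{F}$ and the explicit term on the left, and bound the $|I'|\ge 2$ remainder by $\norm{E_\varepsilon}_{H^{m-1}_x}\norm{f_\varepsilon}_{\mathcal{H}^{m-1}_r}\lesssim\norm{\rho_\varepsilon}_{H^m_x}\norm{f_\varepsilon}_{\mathcal{H}^{m-1}_r}$, which is exactly the paper's $\mathcal{V}_{I,J}$ and its estimate. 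The only cosmetic caveat is that for $|I'|$ close to $m-1$ the bound $\norm{\partial_x^{I'}E_\varepsilon}_{L^\infty_x}\lesssim\norm{E_\varepsilon}_{H^{m-1}_x}$ fails and the $L^\infty$ control must instead be placed on the then low-order $f_\varepsilon$-factor, which the tame product estimates you cite handle automatically (and the paper is no more explicit on this point).
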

We therefore obtain that $ \mathcal{F}=(f_{I,J})$ satisfies a system which is coupled through the linear term $\mathcal{M} _{I,J} \mathcal{F}$. We now prove this lemma.

\begin{proof}
We know that $f_{\varepsilon}$ solves (\ref{SystP}), so we have 

\begin{equation*}
\partial_{x}^{I}\partial_{v}^{J}(\mathcal{T}(f_{\varepsilon})) =0.
\end{equation*}
By using (\ref{ComLem}), we obtain

\begin{equation*}
\mathcal{T}(f_{I,J})+\partial_{x}^{I}\partial_{v}^{J}E_{\varepsilon} \cdot \nabla_{v}f_{\varepsilon} +\mathcal{M} _{I,J} \mathcal{F}=\mathcal{V}_{I,J},
\end{equation*}
with
\begin{equation*}
\mathcal{V}_{I,J}= \sum_{k=2}^{\min(\vert I\vert ,m-1)}\sum_{\sigma,\vert\sigma\vert=k}C_{I,J,k,\sigma}\:\partial^{\sigma}_x E_{\varepsilon}\:\cdot\nabla_x \partial^{I-\sigma}_x \partial^{J}_v f_{\varepsilon} . 
\end{equation*}
Notice that because $\vert\sigma\vert=k\geq 2$, we have in particular that $1+\vert I\vert -\vert\sigma\vert +\vert J\vert\leq m-1$. Also, in the case $\vert I\vert \leq 1$, we simply have $\mathcal{V}_{I,J}=0$.\\
We want to estimate $\norm{\mathcal{V}_{I,J}}_{L^{2}([0,T],\mathcal{H}^{0}_{r})}$. We have that 
\begin{equation*}
\norm{\mathcal{V}_{I,J}}_{\mathcal{H}^{0}_{r}}\lesssim \norm{E_{\varepsilon}}_{H_x^{m-1}}\norm{f_{\varepsilon}}_{\mathcal{H}^{m-1}_{r}}.
\end{equation*}
And, because $\norm{E_{\varepsilon}}_{H_x^{m-1}}\lesssim \norm{\rho_{\varepsilon}}_{H_x^{m}}$ (estimate independent of $\varepsilon$),
\begin{equation*}
\norm{\mathcal{V}_{I,J}}_{L^{2}([0,T],\mathcal{H}^{0}_{r})}\lesssim  \norm{\rho_{\varepsilon}}_{L^{2}([0,T],H_x^{m})}\norm{f_{\varepsilon}}_{L^{\infty}([0,T],\mathcal{H}^{m-1}_{r})}, 
\end{equation*}
and we obtain that
\begin{equation*}
\norm{\mathcal{V}}_{L^{2}([0,T],\mathcal{H}^{0}_{r})}\leq \Lambda(T,R),
\end{equation*}
which ends the proof.
\end{proof}

\subsection{Straightening the transport vector field}

In the following, we make a change of variable to straighten the vector field 
\begin{equation*}
\partial_t +v\cdot\nabla_x+E\cdot\nabla_v\quad \text{into}\quad \partial_t+\Phi(t,x,v)\cdot \nabla_x,
\end{equation*}
where $\Phi$ is defined in the following lemma.
\begin{lemma}
Let $f_{I,J}$ a solution of (\ref{IJ}). We consider $\Phi(t,x,v)$ a smooth solution of the Burgers equation

\begin{equation}\label{Burgers}
\partial_{t}\Phi +\Phi \cdot \nabla_{x} \Phi =E_{\varepsilon},
\end{equation}
such that the Jacobian matrix $(\nabla_{v}\Phi)$ is inversible. We define $g_{I,J}$ by

\begin{equation*}
g_{I,J}(t,x,v):= f_{I,J} (t,x,\Phi).
\end{equation*}
Then $g_{I,J}$ is solution of the equation

\begin{equation}\label{NewEq}
\partial_{t}g_{I,J} +\Phi\cdot \nabla_{x} g_{I,J} +\partial_{x}^{I}\partial_v^J E_{\varepsilon}\cdot \left(\nabla_{v}f_{\varepsilon}\right)(t,x,\Phi)+\mathcal{M}_{I,J} \mathcal{G} =\mathcal{V}_{I,J}(t,x,\Phi),
\end{equation}
where $\mathcal{G}=(g_{I,J})_{I,J\in\mathbb{N}^{d},\vert I\vert + \vert J\vert =m}$.
\end{lemma}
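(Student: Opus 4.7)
The plan is to verify the identity by a direct chain-rule computation, using the Burgers equation (\ref{Burgers}) to cancel the extra terms generated by composition with $\Phi$, and then substituting equation (\ref{IJ}) for $\mathcal{T}(f_{I,J})$ evaluated along $v\mapsto\Phi(t,x,v)$.

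First I would compute the time derivative and spatial gradient of $g_{I,J}(t,x,v)=f_{I,J}(t,x,\Phi(t,x,v))$. Writing $w$ for the third slot of $f_{I,J}$, the chain rule gives
\begin{equation*}
\partial_{t} g_{I,J} = (\partial_{t} f_{I,J})(t,x,\Phi) + \partial_{t}\Phi \cdot (\nabla_{w} f_{I,J})(t,x,\Phi),
\end{equation*}
\begin{equation*}
\nabla_{x} g_{I,J} = (\nabla_{x} f_{I,J})(t,x,\Phi) + (\nabla_{x}\Phi)^{T}\,(\nabla_{w} f_{I,J})(t,x,\Phi).
\end{equation*}
Adding $\Phi\cdot \nabla_{x}g_{I,J}$ to $\partial_{t}g_{I,J}$ and grouping the derivatives in $w$, the combination $\partial_{t}\Phi + \Phi\cdot\nabla_{x}\Phi$ appears as a coefficient of $(\nabla_{w}f_{I,J})(t,x,\Phi)$. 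Applying (\ref{Burgers}) replaces this coefficient by $E_{\varepsilon}(t,x)$, so that
\begin{equation*}
\partial_{t} g_{I,J} + \Phi\cdot \nabla_{x} g_{I,J} = (\partial_{t}f_{I,J})(t,x,\Phi) + \Phi\cdot(\nabla_{x}f_{I,J})(t,x,\Phi) + E_{\varepsilon}(t,x)\cdot (\nabla_{w}f_{I,J})(t,x,\Phi),
\end{equation*}
i.e.\ the right-hand side is precisely $(\mathcal{T}f_{I,J})(t,x,\Phi(t,x,v))$.

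It then suffices to invoke equation (\ref{IJ}) at the point $(t,x,\Phi(t,x,v))$:
\begin{equation*}
(\mathcal{T}f_{I,J})(t,x,\Phi) = -\partial_{x}^{I}\partial_{v}^{J}E_{\varepsilon}(t,x)\cdot (\nabla_{v}f_{\varepsilon})(t,x,\Phi) - (\mathcal{M}_{I,J}\mathcal{F})(t,x,\Phi) + \mathcal{V}_{I,J}(t,x,\Phi).
\end{equation*}
The point that needs a brief check is that $(\mathcal{M}_{I,J}\mathcal{F})(t,x,\Phi)=\mathcal{M}_{I,J}\mathcal{G}$: each summand in $\mathcal{M}_{I,J}\mathcal{F}$ is of the form $f_{I',J'}$ (possibly multiplied by $\partial_{x_{p}}E_{\varepsilon}(t,x)$, which carries no $v$-dependence), so evaluating at $w=\Phi(t,x,v)$ simply replaces every $f_{I',J'}$ by $g_{I',J'}$. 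Substituting this back yields the claimed equation (\ref{NewEq}).

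There is no genuine obstacle here beyond bookkeeping; the only mild subtlety is making sure not to confuse the two roles of the velocity variable (the $v$-variable of $g_{I,J}$ versus the third slot of $f_{I,J}$), which the Burgers identity is tailored to resolve. Once the chain rule produces $\mathcal{T}f_{I,J}$ on the straightened side, the proof reduces to substitution and to the trivial observation above about $\mathcal{M}_{I,J}$.
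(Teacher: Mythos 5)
Your proof is correct and follows essentially the same route as the paper: a chain-rule computation in which the extra term generated by composing with $\Phi$ carries the factor $\partial_{t}\Phi+\Phi\cdot\nabla_{x}\Phi-E_{\varepsilon}$ and is killed by the Burgers equation, after which one substitutes (\ref{IJ}) evaluated at $(t,x,\Phi)$. The paper merely writes the residual as $^{t}(\nabla_{v}\Phi)^{-1}\nabla_{v}g_{I,J}\cdot\left(\partial_{t}\Phi+\Phi\cdot\nabla_{x}\Phi-E_{\varepsilon}\right)$ instead of keeping $(\nabla_{w}f_{I,J})(t,x,\Phi)$ explicit; your remark that the coefficients of $\mathcal{M}_{I,J}$ depend only on $(t,x)$ is the same bookkeeping observation, made explicit.
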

\begin{proof}
The proof is based on a simple calculation 
\begin{equation*}
\begin{split}
\partial_{t}g_{I,J} +\Phi\cdot \nabla_{x} g_{I,J} +\partial_{x}^{I}\partial_v^J E_{\varepsilon}\cdot \left(\nabla_{v}f_{\varepsilon}\right)(t,x,\Phi)+\mathcal{M}_{I,J} \mathcal{G} =\mathcal{V}_{I,J}(t,x,\Phi)\\+ ^{t}(\nabla_{v}\Phi)^{-1} \nabla_{v}g_{I,J}\cdot \left(\partial_{t}\Phi+\Phi\cdot\nabla_{x}\Phi-E_{\varepsilon}\right),
\end{split}
\end{equation*}
hence the result when (\ref{Burgers}) is satisfied. 
\end{proof}

\begin{Rem}
Writing $\mathcal{J}(t,x,v)= \vert \det\nabla_{v}\Phi(t,x,v)\vert$, notice that 
\begin{equation}\label{notice}
\int_{\mathbb{R}^{d}}g_{I,J}\mathcal{J} dv=\int_{\mathbb{R}^{d}}f_{I,J} dv.
\end{equation}
\end{Rem}
Because we introduced the function $\Phi$ in our equations, we shall estimate its Sobolev norms, which is the purpose of the following lemma.

\begin{lemma}\label{estphi}
Suppose that $m>3+d$, there exists $T_{0}$ (depending on $R$ but independent on $\varepsilon$) such that for all $T<\min (T_{0},T^{\varepsilon})$, there exists a unique smooth solution on $[0,T]$ of the Burgers equation (\ref{Burgers}) with initial condition $\Phi\vert_{t=0}=v$. \newline
Moreover, for all $T<\min (T_{0},T^{\varepsilon})$, we have the following estimates: 
\begin{equation}
\underset{[0,T]}{\sup}\norm{\Phi-v}_{W^{k,\infty}_{x,v}}+\underset{[0,T]}{\sup}\norm{\frac{1}{\left(1+\vert v\vert ^{2}\right)^{\frac{1}{2}}}\partial_{t}\Phi}_{W^{k-1,\infty}_{x,v}}\leq T^{\frac{1}{2}}\Lambda(T,R),\quad k<m-d/2-1,
\end{equation}
for $\vert\alpha\vert\leq m-1$ and $\vert\beta\vert \leq m-2$
\begin{equation}
\underset{[0,T]}{\sup}\,\underset{v}{\sup}\norm{\partial^{\alpha}_{x,v}\left(\Phi-v\right)}_{L^{2}_{x}}+\underset{[0,T]}{\sup}\,\underset{v}{\sup}\norm{\frac{1}{\left(1+\vert v\vert ^{2}\right)^{\frac{1}{2}}}\partial^{\beta}_{x,v}\partial_{t}\Phi}_{L^{2}_{x}}\leq T^{\frac{1}{2}}\Lambda(T,R).
\end{equation}
\end{lemma}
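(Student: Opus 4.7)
The plan is to treat the equation as a quasilinear transport problem on $\mathbb{T}^d$ with $v$ held as a parameter. Writing $\Psi := \Phi - v$, the Burgers equation becomes
\begin{equation*}
\partial_t \Psi + (\Psi + v)\cdot\nabla_x \Psi = E_\varepsilon(t,x), \qquad \Psi|_{t=0} = 0,
\end{equation*}
forced by $E_\varepsilon \in L^2_t H^{m-1}_x$ with $\|E_\varepsilon\|_{L^2_t H^{m-1}_x}\leq \|\rho_\varepsilon\|_{L^2_t H^m_x}\leq R$. For local existence and uniqueness I would use the method of characteristics: the Lagrangian flow $X(t;x,v)$ solves the second-order ODE $\ddot X = E_\varepsilon(t,X)$ with $X(0)=x$, $\dot X(0)=v$, and then $\Phi(t,X(t),v) = \dot X(t)$; recovering the Eulerian $\Phi$ requires inverting $x\mapsto X(t;x,v)$, which is possible because $\nabla_x X = I + O\bigl(\int_0^t\|\nabla_x E_\varepsilon\|_{L^\infty}\bigr)$. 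Since $m-1 > 2+d/2$, Sobolev embedding gives $\|E_\varepsilon(t)\|_{W^{2,\infty}_x}\lesssim\|\rho_\varepsilon(t)\|_{H^m_x}$, and Cauchy--Schwarz in time yields $\int_0^T\|E_\varepsilon\|_{W^{2,\infty}}\leq T^{1/2}R$. Choosing $T_0 = T_0(R)$ small enough (independently of $\varepsilon$ and of $v$) keeps $\nabla_x X$ and $\nabla_v \Phi$ small perturbations of the identity, hence invertible, and produces the claimed smooth solution.

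For the $W^{k,\infty}_{x,v}$ estimate with $k < m - d/2 - 1$, I would differentiate by $\partial^\alpha_{x,v}$, $|\alpha|\leq k$, to get
\begin{equation*}
(\partial_t + \Phi\cdot\nabla_x)\partial^\alpha_{x,v}\Psi = \partial^\alpha_{x,v} E_\varepsilon - [\partial^\alpha_{x,v}, \Phi\cdot\nabla_x]\Psi,
\end{equation*}
where the commutator involves only lower-order derivatives of $\Psi$ (plus the explicit linear contribution from differentiating the $v$ piece of $\Phi = \Psi + v$). Integrating along the characteristics and using that $\partial^\alpha_{x,v}\Psi|_{t=0}$ is $0$ (or vanishes identically when $\alpha = 0$), together with the Sobolev embedding $H^{m-1}_x\hookrightarrow W^{k,\infty}_x$, one obtains
\begin{equation*}
\|\Psi(t)\|_{W^{k,\infty}_{x,v}} \lesssim \int_0^t \|E_\varepsilon\|_{W^{k,\infty}_x}\,ds + \int_0^t \Lambda\bigl(\|\Psi\|_{W^{k,\infty}_{x,v}}\bigr)\,\|\Psi\|_{W^{k,\infty}_{x,v}}\,ds,
\end{equation*}
where Cauchy--Schwarz turns the first integral into $T^{1/2}\|\rho_\varepsilon\|_{L^2_t H^m_x}\leq T^{1/2}R$. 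A Gronwall bootstrap on a possibly smaller $[0,T]$ closes the estimate as $T^{1/2}\Lambda(T,R)$. The bound on $(1+|v|^2)^{-1/2}\partial_t\Phi$ is then read off directly from the equation itself, since $\partial_t\Phi = E_\varepsilon - \Phi\cdot\nabla_x\Phi$ and the weight precisely cancels the linear-in-$v$ growth $|\Phi|\leq |v| + \|\Phi-v\|_{L^\infty}$.

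The second set of estimates, $\sup_v\|\partial^\alpha_{x,v}(\Phi-v)\|_{L^2_x}$, follow from the same differentiation procedure, but performing an $L^2_x$ energy estimate on $\partial^\alpha_{x,v}\Psi(\cdot,v)$ with $v$ held fixed. The $W^{k,\infty}$ bound already secured controls the transport coefficient $\Phi$ and all low-order derivatives in $L^\infty_{t,x,v}$, so the energy identity gives
\begin{equation*}
\frac{d}{dt}\|\partial^\alpha_{x,v}\Psi(\cdot,v)\|_{L^2_x}^2 \lesssim \|\partial^\alpha E_\varepsilon\|_{L^2_x}\|\partial^\alpha_{x,v}\Psi\|_{L^2_x} + \Lambda(T,R)\,\|\partial^\alpha_{x,v}\Psi\|_{L^2_x}^2
\end{equation*}
uniformly in $v$, and combined with $\|E_\varepsilon\|_{L^1_t H^{m-1}_x}\leq T^{1/2}R$ and Gronwall this delivers the claimed $T^{1/2}\Lambda(T,R)$ bound. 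The main difficulty I anticipate is the bookkeeping around the nonlinearity $\Phi\cdot\nabla_x\Phi = (\Psi+v)\cdot\nabla_x\Psi$: the explicit $v$ factor means that each $\partial_v$ falling on the coefficient produces a term whose control requires either a weight $(1+|v|^2)^{1/2}$ (when it reaches $\partial_t\Phi$ via the equation) or the sacrifice of one order of $v$-regularity; this is exactly why the statement allows $|\alpha|\leq m-1$ for $\Phi-v$ but only $|\beta|\leq m-2$ for $\partial_t\Phi$ and why the weight $(1+|v|^2)^{1/2}$ appears only on the latter.
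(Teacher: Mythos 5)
Your strategy---constructing $\Phi$ from the characteristics $\ddot X=E_\varepsilon(t,X)$, $X(0)=x$, $\dot X(0)=v$, inverting $x\mapsto X(t;x,v)$ for $T\le T_0(R)$, and then closing Sobolev-embedding/Gronwall estimates that use only the $\varepsilon$-uniform bound $\norm{E_\varepsilon}_{H^{m-1}_x}\lesssim\norm{\rho_\varepsilon}_{H^m_x}$ together with Cauchy--Schwarz in time to produce the $T^{1/2}$ gain---is exactly the standard one. The paper gives no proof of this lemma and simply refers to Lemma 11 of Han-Kwan--Rousset, whose argument is the one you reconstruct; your derivative counts are consistent ($k<m-d/2-1$ comes from $H^{m-1}_x\hookrightarrow W^{k,\infty}_x$, and the Moser-type splitting in the $L^2_x$ energy estimate at order $m-1$ closes because $m>3+d$ makes the $L^\infty$ range large enough to cover every factor that cannot be put in $L^2_x$).

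One item does not follow from your argument as written: the factor $T^{1/2}$ in front of the weighted $\partial_t\Phi$ terms. Reading $\partial_t\Phi=E_\varepsilon-(\Phi-v+v)\cdot\nabla_x(\Phi-v)$ off the equation bounds the quadratic part by $T^{1/2}\Lambda(T,R)$ (it carries a factor $\nabla_x(\Phi-v)$), but the contribution of $E_\varepsilon(t)$ is only controlled pointwise in time by $\sup_t\norm{\rho_\varepsilon(t)}_{H^{m-1}_x}\le\Lambda(T,R)$, with no smallness as $T\to0$; indeed at $t=0$ one has $\partial_t\Phi=E_\varepsilon(0,x)$, so the second summand in the first display of the lemma cannot be $O(T^{1/2})$ in general. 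What your proof actually delivers---and what the paper uses downstream, e.g.\ the bound on $\partial_t\mathfrak{M}$ is stated only as $\Lambda(T,R)$---is $T^{1/2}\Lambda(T,R)$ for the $\Phi-v$ quantities and $\Lambda(T,R)$ for the weighted time-derivative quantities. This is a defect of the statement as transcribed rather than of your method, but you should not claim the $T^{1/2}$ prefactor on the $\partial_t\Phi$ bounds. (A purely cosmetic remark: your auxiliary $\Psi=\Phi-v$ collides with the $\Psi$ the paper introduces later for the inverse flow; rename it to avoid confusion.)
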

A very similar lemma (Lemma 11) has been proved in \cite{HKR}, we refer to it for a complete proof.
\bigbreak

Now we have to consider our new equation (\ref{NewEq}), that we rewrite as
\begin{equation}\label{eqG}
\partial_{t}g_{I,J}+\Phi \cdot\nabla_{x} g_{I,J} +\partial^{I}_{x}\partial^{J}_{v}E_{\varepsilon}\cdot(\nabla_{v}f_{\varepsilon})(t,x,\Phi)+\mathcal{M}_{I,J}\mathcal{G} =S_{I,J},
\end{equation}
where $S_{I,J}(t,x,v)= \mathcal{V}_{I,J} (t,x,\Phi(t,x,v))$.\\
The next step is to introduce the flow of the equation that we denote by $X(t,s,x,v), 0\leq s,t\leq T$ and is given as the solution of 
\begin{equation*}
\partial_{t}X(t,s,x,v) =\Phi (t,X(t,s,x,v),v), \quad X(s,s,x,v)=x.
\end{equation*}
We have to control the Sobolev norms of $X$.
\begin{lemma}\label{LemmaPsi}
For all $t,s,\: 0\leq s\leq t\leq T$ and $m>3+d$, we write  
\begin{equation*}
X(t,s,x,v)=x+(t-s)\left(v+\tilde{X}(t,s,x,v)\right).
\end{equation*}

We have that $\tilde{X}$ satisfies, for $\vert\alpha\vert <m-d/2-1$, $\vert\beta\vert <m-d/2-2$
\begin{equation}
\underset{t,s\in[0,T]}{\sup}\norm{\partial^{\alpha}_{x,v}\tilde{X}(t,s,x,v)}_{L^{\infty}_{x,v}}+\underset{t,s\in[0,T]}{\sup}\norm{\dfrac{1}{(1+\vert v\vert^{2})^{\frac{1}{2}}}\partial^{\beta}_{x,v}\partial_{t}\tilde{X}(t,s,x,v)}_{L^{\infty}_{x,v}}\leq T^{\frac{1}{2}}\Lambda(T,R).
\end{equation}

Furthermore, there exists $\hat{T}_{0}(R)>0$ small enough such that for $T\leq \min(T_{0},\hat{T}_{0}, T^{\varepsilon})$, we get that $x\mapsto x+(t-s) \tilde{X}(t,s,x,v)$ is a diffeomorphism and that, for  $\vert\alpha\vert <m-1$, $\vert\beta\vert <m-2$,

\begin{equation}
\underset{t,s\in[0,T]}{\sup}\underset{v}{\sup}\norm{\partial^{\alpha}_{x,v}\tilde{X}(t,s,x,v)}_{L^{2}_x}+\underset{t,s\in[0,T]}{\sup}\underset{v}{\sup}\norm{\dfrac{1}{(1+\vert v\vert^{2})^{\frac{1}{2}}}\partial^{\beta}_{x,v}\partial_{t}\tilde{X}(t,s,x,v)}_{L^{2}_x}\leq T^{\frac{1}{2}}\Lambda(T,R).
\end{equation}

Finally, there exists $\Psi(t,s,x,v)$ such that for $t,s\in[0,T]$ and $T\leq \min(T_{0},\hat{T}_{0}, T^{\varepsilon})$, we have,
\begin{equation}
X(t,s,x,\Psi(t,s,x,v))=x+(t-s)v,
\end{equation}
with $\Psi$ satisfying the estimates, for $\vert\alpha\vert <m-d/2-1$, $\vert\beta\vert <m-d/2-2$  

\begin{equation}
\underset{t,s\in[0,T]}{\sup}\norm{\partial^{\alpha}_{x,v}(\Psi(t,s,x,v)-v)}_{L^{\infty}_{x,v}}+\underset{t,s\in[0,T]}{\sup}\norm{\dfrac{1}{(1+\vert v\vert^{2})^{\frac{1}{2}}}\partial^{\beta}_{x,v}\partial_{t}\Psi(t,s,x,v)}_{L^{\infty}_{x,v}}\leq T^{\frac{1}{2}}\Lambda(T,R),
\end{equation}
for $\vert\alpha\vert <m-1$, $\vert\beta\vert <m-2$

\begin{equation}
\underset{t,s\in[0,T]}{\sup}\underset{v}{\sup}\norm{\partial^{\alpha}_{x,v}(\Psi(t,s,x,v)-v)}_{L^{2}_x}+\underset{t,s\in[0,T]}{\sup}\underset{v}{\sup}\norm{\dfrac{1}{(1+\vert v\vert^{2})^{\frac{1}{2}}}\partial^{\beta}_{x,v}\partial_{t}\Psi(t,s,x,v)}_{L^{2}_x}\leq T^{\frac{1}{2}}\Lambda(T,R).
\end{equation}

\end{lemma}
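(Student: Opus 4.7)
The plan is to reduce every bound on $\tilde X$, $\partial_t\tilde X$, $\Psi - v$ and $\partial_t\Psi$ to the corresponding bound on $\Phi - v$ and $\partial_t\Phi/(1+|v|^2)^{1/2}$ provided by Lemma \ref{estphi}. The starting point is the defining ODE $\partial_t X = \Phi(t,X,v)$ with $X(s,s,x,v)=x$: rewriting it in integral form and changing variable $\tau = s + \theta(t-s)$ gives the master formula
\begin{equation*}
\tilde X(t,s,x,v) = \int_0^1 \bigl[\Phi\bigl(s+\theta(t-s),\, X(s+\theta(t-s),s,x,v),\, v\bigr) - v\bigr]\, d\theta,
\end{equation*}
which has already removed the apparent $1/(t-s)$ singularity at $t=s$ and which I shall differentiate in $x$, $v$ and $t$.

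The $L^\infty_{x,v}$ bound on $\tilde X$ is then immediate from $\norm{\Phi-v}_{L^\infty_{x,v}} \leq T^{1/2}\Lambda(T,R)$, and the higher $L^\infty$ bounds on $\partial_{x,v}^\alpha \tilde X$ for $|\alpha| < m - d/2 - 1$ follow by induction on $|\alpha|$: applying $\partial_{x,v}^\alpha$ to the master formula and expanding via the multivariate chain rule, each resulting term is a product of a derivative of $\Phi - v$ evaluated along $X$ (at a regularity controlled by Lemma \ref{estphi}) and lower-order derivatives of $X$ handled by the inductive hypothesis. The time derivative is obtained the same way after differentiating the master formula in $t$, which produces $\theta\,(\partial_\tau\Phi + \partial_t X \cdot \nabla_x \Phi)$ evaluated along the flow; since $\partial_t X = v + \tilde X$ grows at most linearly in $v$, the weight $(1+|v|^2)^{-1/2}$ is precisely what is needed to absorb that growth using the $W^{k-1,\infty}$ bound on $\partial_t\Phi/(1+|v|^2)^{1/2}$. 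The $L^2_x \sup_v$ estimates follow the same scheme: in every product arising from the chain rule, all but one factor are placed in $L^\infty_{x,v}$ at a lower order (safe by Sobolev embedding since $m>3+d$), while the top-order factor goes into the $L^2_x \sup_v$ norm, which is exactly the norm controlled by Lemma \ref{estphi}.

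For the diffeomorphism claim, the bound $\norm{\nabla_x \tilde X}_{L^\infty_{x,v}} \leq T^{1/2}\Lambda(T,R)$ shows that $x\mapsto x + (t-s)\tilde X(t,s,x,v)$ is a Lipschitz perturbation of the identity of size $T^{3/2}\Lambda(T,R)$, hence a diffeomorphism of $\mathbb{T}^d$ as soon as $T\leq \hat T_0(R)$ for some $\hat T_0$ depending only on $R$. The function $\Psi$ is then defined by the implicit relation $\Psi(t,s,x,v) = v - \tilde X(t,s,x,\Psi(t,s,x,v))$, which is solvable uniquely and smoothly by the implicit function theorem since the Jacobian of $w \mapsto w + \tilde X(t,s,x,w)$ is $I + O(T^{1/2})$; its estimates are transferred from those of $\tilde X$ through composition, at the cost of only a harmless $1+O(T^{1/2})$ Jacobian factor. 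The main technical obstacle is the chain-rule bookkeeping for derivatives of $\tilde X$ of order close to $m-1$, where one must ensure that no term demands more regularity of $\Phi$ than Lemma \ref{estphi} delivers; the ranges of indices stated here are precisely those that close the induction, and the full bookkeeping mirrors Lemma 11 of \cite{HKR}, to which I refer for the complete details.
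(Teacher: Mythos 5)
Your proof is correct and follows essentially the same route as the paper, which in fact gives no argument of its own here but defers entirely to \cite{HKR} (Lemma 13, not Lemma 11, which is the one for $\Phi$): the integral ``master formula'' for $\tilde{X}$, the Fa\`a di Bruno bookkeeping reducing everything to the bounds of Lemma \ref{estphi}, the smallness of $\nabla_{x,v}\tilde{X}$ giving the diffeomorphism, and the fixed-point relation $\Psi=v-\tilde{X}(t,s,x,\Psi)$ are exactly the ingredients used there. One cosmetic slip: $\partial_{t}X=\Phi(t,X,v)$ rather than $v+\tilde{X}$ (so that, via the Burgers equation, the integrand for $\partial_{t}\tilde{X}$ is just $\theta E_{\varepsilon}(\tau,X)$), but your conclusion that $\partial_{t}X$ grows at most linearly in $v$ and is absorbed by the weight $(1+\vert v\vert^{2})^{-1/2}$ is unaffected.
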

Once again, a similar lemma (Lemma 13) has already been proved in \cite{HKR}.
\bigbreak 
In order to control the linear part, we define the tensor $\mathcal{M}$ by $(\mathcal{M}H)_{I,J}=M_{I,J}H$ and for $0\leq s,t\leq T$, $x\in \mathbb{T}^{d}$, $v\in \mathbb{R}^{d}$, and we introduce $\mathfrak{M}(t,s,x,v)$ as the solution of 

\begin{equation*}
\partial_{t}\mathfrak{M}(t,s,x,v) =\mathcal{M}(t,x,\Phi(t,x,v))\mathfrak{M}(t,s,x,v),\quad \mathfrak{M}(s,s,x,v) =I ,
\end{equation*} 
whose existence and uniqueness is guaranteed by the Cauchy-Lipschitz theorem. \\
By a Gronwall type argument and thanks to Lemma \ref{estphi}, we can show that for $k<m-2$
\begin{equation}
\underset{0\leq s,t\leq T}{\sup}\left(\norm{\mathfrak{M}}_{W^{k,\infty}_{x,v}}+\norm{\partial_{t}\mathfrak{M}}_{W^{k,\infty}_{x,v}}+\norm{\partial_{s}\mathfrak{M}}_{W^{k,\infty}_{x,v}}\right)\leq \Lambda(T,R).
\end{equation}

\subsection{Introduction of the average operator}
We define in the next lemma the fundamental operator $K_G$ which was introduced in \cite{HKR}
\begin{lemma}\label{AOP}
For a smooth function $G(t,s,x,v)$, we define the integral operator $K_{G}$ acting on $F(t,x)$ by
\begin{equation}\label{KG}
K_{G}(F)(t,x) =\int_{0}^{t}\int_{\mathbb{R}^d} (\nabla_{x}F)(s,x-(t-s)v)\cdot G(t,s,x,v) dvds.
\end{equation}
For $f_{\varepsilon}$ satisfying (\ref{SystP}) and $\rho_{\varepsilon}=\int f_{\varepsilon} dv$, the functions $\partial^{I}_{x}\rho_{\varepsilon}$  with $\vert I\vert =m$ satisfy the equation: 
\begin{equation*}
\partial^{I}_{x}\rho_{\varepsilon} =\sum_{K\in\{1,...,d\}^{m}} K_{H_{(K,0),(I,0)}}\left(U*\left(\left(I-\varepsilon^{2}\Delta\right)^{-1}\partial_{x}^{K}\rho_{\varepsilon}\right)\right)+\mathcal{R}_{I,0},
\end{equation*}
with
\begin{equation*}
\begin{split}
&H_{(K,L),(I,J)}=\\&\mathfrak{M}_{(K,L),(I,J)}(s,t,x,\Psi(s,t,x,v))(\nabla_{v}f)(s,x-(t-s)v,\Psi (s,t,x,v))\mathcal{J}(t,s,\Psi(s,t,x,v))\tilde{\mathcal{J}}(s,t,x,v),
\end{split}
\end{equation*}
\begin{equation*}
\mathcal{J}=\vert det\nabla_{v}\Phi(t,x,\Psi(s,t,x,v))\vert, \quad \tilde{\mathcal{J}}=\vert det\nabla_{v} \Psi(s,t,x,v) \vert,
\end{equation*}
and $\mathcal{R}_{I,0}$ satisfies for $T$ small enough, the estimate
\begin{equation*}
\norm{\mathcal{R}_{I,0}}_{L^{2}([0,T],L^{2}_{x})} \lesssim T^{\frac{1}{2}} \Lambda (T,R).
\end{equation*} 
\end{lemma}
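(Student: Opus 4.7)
The plan is to derive the representation formula by Duhamel's principle applied to equation (\ref{eqG}) for the vector $\mathcal{G} = (g_{I,J})_{|I|+|J|=m}$, which takes the form
\begin{equation*}
(\partial_t + \Phi\cdot\nabla_x)\mathcal{G} + \mathcal{M}\mathcal{G} = -\mathcal{E} + S,
\end{equation*}
where $\mathcal{E}_{I,J}(t,x,v) = \partial^I_x\partial^J_v E_\varepsilon(t,x) \cdot (\nabla_v f_\varepsilon)(t,x,\Phi(t,x,v))$. Along the flow $X(\cdot,t,x,v)$ of the straightened transport this becomes an ODE in $s$ for fixed $(t,x,v)$; using the resolvant $\mathfrak{M}$ to absorb the linear coupling $\mathcal{M}\mathcal{G}$, Duhamel's formula expresses each $g_{I,0}(t,x,v)$ as a linear combination, indexed by pairs $(K,L)$ with $|K|+|L|=m$, of an initial-data term at time $0$ and a time-integral of the $\mathcal{E}_{K,L}$ and $S_{K,L}$ contributions evaluated along characteristics.

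Next, I would use (\ref{notice}) to write $\partial^I_x\rho_\varepsilon(t,x) = \int g_{I,0}(t,x,v)\mathcal{J}(t,x,v)\,dv$, and in the time-integral coming from $\mathcal{E}$ perform the change of variable $v \mapsto \Psi(s,t,x,v)$. By Lemma \ref{LemmaPsi} this turns $X(s,t,x,v)$ into $x-(t-s)v$ and produces the Jacobian $\tilde{\mathcal{J}}(s,t,x,v)$. Since $E_\varepsilon$ depends only on $x$, we have $\partial^K_x\partial^L_v E_\varepsilon = 0$ whenever $L \neq 0$, so only the terms with $L=0$ survive as main terms; this is precisely why the sum in the lemma runs over $K \in \{1,\ldots,d\}^m$ with $L=0$. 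For these surviving terms the identity
\begin{equation*}
\partial^K_x E_\varepsilon(s, x-(t-s)v) = -\nabla_x\bigl(U * (I-\varepsilon^2\Delta)^{-1}\partial^K_x\rho_\varepsilon\bigr)(s, x-(t-s)v)
\end{equation*}
produces exactly the $\nabla_x F$ factor in the definition (\ref{KG}) of $K_G$, and collecting the factors $\mathfrak{M}_{(K,0),(I,0)}(s,t,x,\Psi)$, $(\nabla_v f_\varepsilon)(s,x-(t-s)v,\Psi)$, $\mathcal{J}(t,s,\Psi)$ and $\tilde{\mathcal{J}}(s,t,x,v)$ gives the announced kernel $H_{(K,0),(I,0)}$.

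The remainder $\mathcal{R}_{I,0}$ then collects three pieces: the initial-data contribution (a linear combination of $g_{K,L}(0, X(0,t,x,v), v)$'s through $\mathfrak{M}$), the time-integral contribution of $S_{I,J}(t,x,v) = \mathcal{V}_{I,J}(t,x,\Phi(t,x,v))$, and the time-integrals where $L \neq 0$ (whose $\mathcal{E}_{K,L}$ factor vanishes but whose $S_{K,L}$ companion does not). For the initial-data piece, $\|f^0_\varepsilon\|_{\mathcal{H}^m_{2r}}\leq M_0$ together with the uniform controls on $\mathfrak{M}, \mathcal{J}, X$ from Lemmas \ref{estphi} and \ref{LemmaPsi} yield an $L^\infty_t L^2_x$ bound, which converts to the $T^{1/2}\Lambda(T,R)$ bound in $L^2_t L^2_x$ on $[0,T]$. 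For the $S$-contribution, Lemma \ref{RReste} provides $\|\mathcal{V}\|_{L^2([0,T],\mathcal{H}^0_r)}\leq \Lambda(T,R)$, and Cauchy--Schwarz in time combined with the weighted $L^2_v$ integration (using $r > d/2$ to control $\int (1+|v|^2)^{-r}\,dv$) yields the claimed $L^2_t L^2_x$ estimate.

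The main obstacle, I expect, is the careful bookkeeping of the $\mathfrak{M}$ resolvant after the change of variable $v \mapsto \Psi$: because $\mathfrak{M}$ is defined as the resolvant at frozen $(x,v)$ rather than along the flow $X(\cdot,t,x,v)$, reconciling the two viewpoints so that the arguments $(s,t,x,\Psi(s,t,x,v))$ in $H$ align with the statement of the lemma, while simultaneously keeping the leftover commutator errors of size $T^{1/2}\Lambda(T,R)$, is the delicate point; the analogous calculation in \cite{HKR} provides a template.
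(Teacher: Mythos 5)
Your proposal is correct and follows essentially the same route as the paper: Duhamel's formula along the straightened flow with the resolvant $\mathfrak{M}$, integration in $v$ against $\mathcal{J}$ via (\ref{notice}), the change of variables $v\mapsto\Psi(s,t,x,\cdot)$ to produce the kernel $H_{(K,0),(I,0)}$, and the remainder bounds from Lemmas \ref{RReste}, \ref{estphi} and \ref{LemmaPsi}. The observation that only the $L=0$ components of the $\eta$-term survive (since $E_\varepsilon$ depends only on $x$) is exactly what the paper uses implicitly.
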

\begin{proof}
We first introduce the notations
\begin{equation*}
\eta(t,x,v) = \left( \partial^I_x\partial ^J_v E_{\varepsilon}(t,x) \cdot \nabla_{v}f_{\varepsilon}(s,x,\Phi(t,x,v))   \right)_{I,J}, \quad S=(S_{I,J})_{I,J}.
\end{equation*}

This allows us to put the system (\ref{eqG}) under the following equation satisfied by $\mathcal{G}$
\begin{equation*}
\partial_{t}\mathcal{G}(t,x,v)+\Phi \cdot\nabla_{x}\mathcal{G}(t,x,v) +\eta(t,x,v) +\mathcal{M}\mathcal{G}(t,x,v) =S(t,x,v).
\end{equation*}

Integrating by respect with the time variable the expression  $\partial_{t}(\mathfrak{M}(t,s,x,v)\mathcal{G}(t,X(t,s,x,v),v))$ we obtain that
\begin{equation*}
\begin{split}
\mathcal{G}(t,x,v)=\mathfrak{M}(0,t,x,v) \mathcal{G}^{0}(X(0,t,x,v),v) +\int_{0}^{t}\mathfrak{M}(s,t,x,v){S} (s,X(s,t,x,v),v) ds &\\-\int_{0}^{t} \mathfrak{M}(s,t,x,v)\eta(s,X(s,t,x,v),v)ds,
\end{split}
\end{equation*}
with $\mathcal{G}^{0}=(g^{0}_{I,J})_{I,J}$. We multiply the equation by $\mathcal{J}$ and then with integrate by respect with the variable $v$, which yields 
\begin{equation}\label{MajG}
\int_{\mathbb{R}^{d}} \mathcal{G}(t,x,v) \mathcal{J} (t,x,v) dv =\mathcal{I}_{0} +\mathcal{I}_{F}-\int_{\mathbb{R}^{d}}\int_{0}^{t}\mathfrak{M}(s,t,x,v) \eta(s,X(s,t,x,v),v)\mathcal{J}(t,x,v)dvds,
\end{equation}
with 
\begin{equation*}
\mathcal{I}_{0}=\int_{\mathbb{R}^{d}}\mathfrak{M}(0,t,x,v) \mathcal{G}^{0}(X(0,t,x,v),v) \mathcal{J}(t,x,v) dv,
\end{equation*}
\begin{equation*}
\mathcal{I}_{F} =\int_{0}^{t}\int_{\mathbb{R}^{d}}\mathfrak{M}(s,t,x,v) S (s,X(s,t,x,v),v)\mathcal{J}(t,x,v)dsdv.
\end{equation*}
We want to show that $\mathcal{I}_{0},\mathcal{I}_{F}$ can be considered as remainders. Let us recall some of the previous estimates on $\Phi$ and $\mathfrak{M}$ 
\begin{equation}\label{EstM}
\underset{0\leq s,t\leq T}{\sup}\norm{\mathfrak{M}(t,s)}_{L^{\infty}_{x,v}}\leq \Lambda(T,R).
\end{equation}
\begin{equation*}
\underset{[0,T]}{\sup}\norm{\Phi(t)-v}_{W^{1,\infty}_{x,v}}\leq \Lambda(T,R).
\end{equation*}
Now we can give the following estimate 
\begin{equation*}
\begin{split}
\left\vert \int \mathfrak{M}(t,0,x,v)\mathcal{G}^{0} (X(0,t,x,v),v)\mathcal{J}(t,x,v)dv\right\vert&\\ \leq \underset{0\leq s,t\leq T}{\sup}\norm{\mathfrak{M}(s,t)}_{L^{\infty}_{x,v}} \underset{0\leq t\leq T}{\sup}\norm{\mathcal{J}(t)}_{L^{\infty}_{x,v}} \int\vert\mathcal{G}^{0} (X(0,t,x,v),v)\vert dv&\\ \leq \Lambda(T,R) \sum_{I,J} \int \vert g^{0}_{I,J} (X(0,t,x,v),v)\vert dv.
\end{split} 
\end{equation*} 
Thus, we have  
\begin{equation*}
\norm{\mathcal{I}_{0}}_{L^{2}([0,T],L^{2}_{x})}\leq \Lambda (T,R) \sum_{I,J} \norm{\int_{v}\norm{g^{0}_{I,J} (X(0,t,\cdot,v),v)}_{L^{2}_{x}}dv}_{L^{2}[0,T]}.
\end{equation*}
To estimate the last term, we use the change of variable in $x$,  $y=X (0,t,x,v)+tv=x-t\tilde{X}(0,t,x,v)$ and, thanks to the estimates on $\tilde{X}$ 
\begin{equation*}
\underset{t,s\in[0,T]}{\sup}\norm{\partial_{x}\tilde{X}(t,s,x,v)}_{L^{\infty}_{x,v}}\leq \Lambda(T,R),
\end{equation*}
we get that 
\begin{equation*}
\norm{g^{0}_{I,J} (X(0,t,\cdot,v),v)}_{L^{2}_{x}}dv\leq \Lambda (T,R) \norm{g^{0}_{I,J} (\cdot-tv),v}_{L^{2}_x} \leq \Lambda (T,R) \norm{g^{0}_{I,J}(\cdot,v) }_{L^{2}_x}.
\end{equation*}
Then by Cauchy-Schwarz, we deduce 
\begin{equation*}
\norm{\mathcal{I}_{0}}_{L^{2}([0,T],L^{2}_{x})}\leq T^{\frac{1}{2}} \Lambda (T,R) \left( \int_{\mathbb{R}^{d}} \frac{dv}{(1+\vert v\vert^{2})^{r}}\right) ^{\frac{1}{2}}\sum_{I,J} \norm {g^{0}_{I,J} }_{\mathcal{H}^{0}_{r}}.
\end{equation*}
Again, we can make the following change of variable
\begin{equation*}
\int_{x,v} \vert g^{0}_{I,J}(x,v)\vert ^2 dxdv= \int_{x,v} \vert f^{0}_{I,J}(x,v)\vert ^2 \mathcal{J}(0,x,v) dxdv \leq \Lambda (T,R) \norm{f^0_{I,J}}_{\mathcal{H}^0_{r}}^2.
\end{equation*}
Finally, we can use the fact that $\norm{f^0_{I,J}}_{\mathcal{H}^0_{r}}\leq \norm{f_{\varepsilon}}_{\mathcal{H}^m_{r}}\leq R$ to conclude that
\begin{equation*}
\norm{\mathcal{I}_{0}}_{L^{2}([0,T],L^{2}_{x})}\leq T^{\frac{1}{2}} \Lambda (T,R).
\end{equation*}
Let us show with similar arguments that 
\begin{equation*}
\norm{\mathcal{I}_{f}}_{L^{2}([0,T],L^{2}_{x})}\leq T \Lambda (T,R).
\end{equation*}
First, we use once again (\ref{EstM}) to show that
\begin{equation*}
\norm{\mathcal{I}_{F}}_{L^{2}([0,T],L^{2}_{x})}\leq \Lambda (T,R) \sum_{I,J} \norm{\int_{0}^{t}\int_{v}\norm{S_{I,J} (s,X(s,t,\cdot,v),v)}_{L^{2}_{x}}dvds}_{L^{2}[0,T]}.
\end{equation*}
As we did just above, we use the change of variable in $x$,  $y=X (s,t,x,v)+(t-s)v=x-(t-s)\tilde{X}(s,t,x,v)$, and the estimates on $\tilde{X}$ to show that 
\begin{equation*}
\norm{S_{I,J} (s,X(s,t,\cdot,v),v)}_{L^{2}_{x}}\leq \Lambda (T,R) \norm{S_{I,J} (s,\cdot,v)}_{L^{2}_{x}}.
\end{equation*}
Integrating in $v$, we get by the Cauchy-Schwarz inequality
\begin{equation*}
\int_{v}\norm{S_{I,J} (s,X(s,t,\cdot,v),v)}_{L^{2}_{x}}dv\leq \Lambda (T,R) \norm{S_{I,J} (s)}_{\mathcal{H}^{0}_{r}}.
\end{equation*}
So we finally arrive at 
\begin{equation*}
\norm{\mathcal{I}_{F}}_{L^{2}([0,T],L^{2}_{x})}\leq\Lambda (T,R) \norm{\int_{0}^{t} \norm{S_{I,J} (s)}_{\mathcal{H}^{0}_{r}}}_{L^{2}[0,T]}\leq \Lambda(T,R) T \norm{S}_{L^2 ([0,T],\mathcal{H}^{0}_{r})}.
\end{equation*}
Recalling that $S(t,x,v)= \mathcal{V}(t,x,\Phi(t,x,v))$, we can use one last change of variable, and the estimates on the derivatives of $\Phi$, to show that 
\begin{equation*}
\norm{S}_{L^2 ([0,T],\mathcal{H}^{0}_{r})}\leq \Lambda(T,R) \norm{\mathcal{V}}_{L^2([0,T],\mathcal{H}^{0}_{r})}\leq \Lambda (T,R),
\end{equation*}
where the last inequality comes from the fact that $\mathcal{V}$ is a remainder term, which we proved in lemma \ref{RReste}.\\
Thus, we have proved that
\begin{equation*}
\norm{\mathcal{I}_{F}}_{L^{2}([0,T],L^{2}_{x})}\leq T \Lambda (T,R).
\end{equation*}
Let us go back to (\ref{MajG}). Thanks to the results on $\mathcal{I}_0$ and $\mathcal{I}_F$, and using (\ref{notice}), we have
\begin{equation*}
\begin{split}
&\partial^{I}_{x}\rho_{\varepsilon} =\mathcal{R}_{I,0} - \\&\int_{0}^{t} \int \sum_{K} \mathfrak{M}_{(K,0),(I,J)}(\partial^{K}_{x}E) (s,X(s,t,x,v)) \cdot (\nabla_{v}f)(s,X(s,t,x,v),\Phi(s,X(s,t,x,v),v))\mathcal{J}(t,x,v)dv ds, 
\end{split}
\end{equation*} 
with 
\begin{equation*}
\norm{\mathcal{R}_{I,0}}_{L^{2}([0,T],L^{2}_{x})}\lesssim T^{\frac{1}{2}} \Lambda (T,R). 
\end{equation*}
We can finally use the change of variable $v=\Psi (s,t,x,w)$ provided by lemma \ref{LemmaPsi} to obtain that
\begin{equation*}
\partial^{I}_{x}\rho_{\varepsilon} = - \int_{0}^{t} \int \sum_{K} (\partial^{K}_{x}E)(s,x-(t-s)v)\cdot H_{(K,0),(I,0)}(t,s,x,v)dvds + \mathcal{R}_{I,0},
\end{equation*}
with 
\begin{equation*}
\begin{split}
&H_{(K,L),(I,J)}=\\&\mathfrak{M}_{(K,L),(I,J)}(s,t,x,\Psi(s,t,x,v))(\nabla_{v}f)(s,x-(t-s)v,\Psi (s,t,x,v))\mathcal{J}(t,s,\Psi(s,t,x,v))\tilde{\mathcal{J}}(s,t,x,v),
\end{split}
\end{equation*}
which gives the result.
\end{proof}

\subsection{Focus on the operator $K_{G}$}
In order to control the norms of the derivatives of $\rho_{\varepsilon}$, we have to understand better the operator $K$. Following \cite{HKR}, let us first introduce a new norm.

\begin{definition}
For $T>0$, we define
\begin{equation*}
\norm{G}_{T,s_1,s_2}= \underset {0\leq t\leq T}{\sup}\left( \sum_{k} \underset {0\leq s\leq T}{\sup}\underset {\xi}{\sup} \left((1+\vert k\vert)^{s_2}(1+\vert \xi\vert) ^{s_1}    \left\vert   (\mathcal{F}_{x,v}G)(t,s,k,\xi)\right\vert\right)^{2}     \right)^{\frac{1}{2}}.
\end{equation*}
\end{definition}

\begin{prop}\label{MdK}
There exists $C>0$ such that for every $T>0$,  $\alpha\in (0,1)$,  for every $G$ satisfying $\norm{G}_{T,s_{1},s_{2}} <\infty$ and for all $s_{1}>1$, $s_{2}>d/2$, we have  
\begin{equation*}
\norm{K_{G}(F)}_{L^{2}([0,T],L^{2}_{x})}\leq C \,T^{\alpha/2} \,\norm{G}_{T,s_{1},s_{2}} \norm{F}_{L^{2}([0,T],H^{\alpha}_{x})},\quad  \forall F\in L^{2}([0,T],H^{\alpha}_{x}).
\end{equation*}
\end{prop}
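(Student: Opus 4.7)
I take the Fourier transform in $x$, identify $K_G$ as an integral-kernel operator, and apply Schur's test. Combining the identity $\widehat{(\nabla_x F)(s,\cdot-(t-s)v)}(k)=ik\,\hat F(s,k)\,e^{-i(t-s)k\cdot v}$ with the $(x,v)$-Fourier expansion of $G$, the $v$-integral produces a Dirac mass forcing the $v$-frequency to equal $(t-s)k'$, yielding
\[
\widehat{K_G(F)}(t,k)=C_d\int_0^t\sum_{k'\in\mathbb Z^d}ik'\,\hat F(s,k')\,\widetilde G(t,s,k-k',(t-s)k')\,ds,
\]
where $\widetilde G:=\mathcal F_{x,v}G$.

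\textbf{Reduction to a Schur bound.} Set $h_\ell(t):=\sup_{s,\xi}(1+|\ell|)^{s_2}(1+|\xi|)^{s_1}|\widetilde G(t,s,\ell,\xi)|$; by the very definition of the norm, $\norm{h(t)}_{\ell^2_\ell}\le\norm{G}_{T,s_1,s_2}$ uniformly in $t\in[0,T]$. To absorb the $H^\alpha_x$-regularity of $F$, split $|k'|=|k'|^{\alpha}\cdot|k'|^{1-\alpha}$ and consider the operator $\mathcal L$ acting between the $L^2([0,T];\ell^2(\mathbb Z^d))$ spaces, whose kernel satisfies
\[
|\mathcal L(t,s,k,k')|\le K_0(t,s,k,k'):=\mathbf 1_{s\le t}\,\frac{|k'|^{1-\alpha}\,h_{k-k'}(t)}{(1+|k-k'|)^{s_2}\,(1+(t-s)|k'|)^{s_1}}.
\]
By Plancherel, the proposition reduces to $\norm{\mathcal L}\le CT^{\alpha/2}\norm{G}_{T,s_1,s_2}$, which by Schur's test follows from appropriate bounds on the row/column sums $M_1,M_2$.

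\textbf{Asymmetric estimates.} The key idea is to bound the two Schur sums with different powers of $T$. The $s$-integral admits a \emph{$T$-uniform} bound: since $s_1>1$,
\[
\int_0^t(1+(t-s)|k'|)^{-s_1}\,ds\le C/|k'|\qquad\text{for all }k'\neq 0,
\]
which cancels $|k'|^{1-\alpha}$ against $1/|k'|$ (leaving $|k'|^{-\alpha}\le 1$); a Cauchy-Schwarz in $\ell$ then gives $\sum_\ell h_\ell(t)(1+|\ell|)^{-s_2}\le C\norm{G}_{T,s_1,s_2}$ thanks to $s_2>d/2$, so that
\[
M_2:=\sup_{t,k}\int_0^t\sum_{k'}K_0\,ds\le C\norm{G}_{T,s_1,s_2}.
\]
The $t$-integral, on the other hand, is estimated via the interpolation $\min(a,b)\le a^\alpha b^{1-\alpha}$ (with $a=T$, $b=C/|k'|$):
\[
|k'|^{1-\alpha}\int_s^T(1+(t-s)|k'|)^{-s_1}\,dt\le|k'|^{1-\alpha}\min(T,C/|k'|)\le CT^\alpha;
\]
combined with the same $k$-sum, this yields
\[
M_1:=\sup_{s,k'}\int_s^T\sum_k K_0\,dt\le CT^\alpha\norm{G}_{T,s_1,s_2}.
\]
Schur's test then gives $\norm{\mathcal L}\le\sqrt{M_1M_2}\le CT^{\alpha/2}\norm{G}_{T,s_1,s_2}$, as claimed.

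\textbf{Main obstacle.} The crux is that the operator $K_G$ carries an explicit $\nabla_x$ acting on $F$, yet the estimate only requires $F\in H^\alpha_x$ with $\alpha<1$: the missing $1-\alpha$ derivatives must be recovered from averaging in $v$ and $s$. This gain is encoded in the identity $\int_0^\infty(1+u|k'|)^{-s_1}\,du\le C/|k'|$, which eats one power of $|k'|$ coming from $\nabla_x F$; the condition $s_1>1$ is needed precisely to make this integral finite, while $s_2>d/2$ controls the Cauchy-Schwarz sum in $\ell$. Translating this heuristic into a clean estimate with the sharp $T^{\alpha/2}$ factor requires the asymmetric use of Schur's test described above.
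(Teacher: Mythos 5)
Your argument is correct and is essentially the paper's proof: the same Fourier representation of $K_G$ in $(t,k)$, the same use of the decay of $\mathcal{F}_{x,v}G$ in both frequency variables, and the same asymmetric treatment of the two dual sums (one bounded uniformly in $T$, the other by $T^{\alpha}$), the paper's Cauchy--Schwarz-plus-Fubini step being exactly Schur's test written out by hand. The only cosmetic difference is that you interpolate via $\min(T,C/|k'|)\leq T^{\alpha}(C/|k'|)^{1-\alpha}$ where the paper uses H\"older on $\int_0^{|k|T}(1+\tau)^{-s_1}d\tau$; both yield the same $T^{\alpha}$ factor.
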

For practical uses, it is convenient to relate the norm $\norm{G}_{T,s_{1},s_{2}} <\infty$ to a more tractable norm. From \cite{HKR}, we know that if $p>1+d$, $\sigma > d/2$, we can find $s_{2}>d/2$ et $s_{1}>1$ such that 
\begin{equation*}
\norm{G}_{T,s_{1},s_{2}}\leq \underset{0\leq s,t\leq T}{\sup}\norm{G(t,s)}_{\mathcal{H}^{p}_{\sigma}}.
\end{equation*}
In the expression of the operator $K_G$ (\ref{KG}), there seems to have of loss of a derivative in $x$ , but this proposition shows that the operator is actually continuous from $L^{2}([0,T],H^{\alpha}_{x})$ into $L^{2}([0,T],L^{2}_{x})$ provided that $G$ is smooth enough. Once again, we emphasize that this is a key propriety in the proof of Theorem $\ref{theorem2}$. We are able to gain regularity in $x$ by integrating over $v$ and $t$. This propriety explains why we can estimate the $H^{m}$ norm of $\rho_{\varepsilon}$ without loss of derivative.      
\bigbreak 
Proposition \ref{MdK} remains true for $\alpha=0$, this was proved in \cite{HKR}.

\begin{proof}

By using Fourier series in $x$, we write that 
\begin{equation*}
F(t,x)=\sum_{k\in\mathbb{Z}^d}\hat{F}_{k}(t)e^{ik\cdot x}.
\end{equation*}
By definition of $K_{G}$, we have

\begin{align*}
K_{G}F(t,x)&= \int_{0}^{t}\sum_{k} \hat{F}_{k} (s) e^{i k\cdot x}\cdot \int e^{-i k\cdot v(t-s)}G(t,s,x,v) dv ds \\
&=  \int_{0}^{t} \sum_{k} \hat{F}_{k}(s) e^{i k\cdot x} ik \cdot (\mathcal{F}_{v}G) (t,s,x,k(t-s))ds,
\end{align*}
where $\mathcal{F}_{v}G$ is the Fourier transform of $G(t,s,x,v)$ with respect to the last variable. By Fourier expanding in the $x$ variable, we deduce that
\begin{equation*}
K_{G}F(t,x) = \sum_{k} e^{ik\cdot x} \sum _{l} e^{ik\cdot x} \int_{0}^{t} \hat{F}_{k} (s) ik \cdot (\mathcal{F}_{x,v}G)(t,s,l,k(t-s)) ds.  
\end{equation*}
Changing $l$ into $l+k$ we can rewrite this expression as 

\begin{equation*}
K_{G}F(t,x)=\sum_{l} e^{ik\cdot x} \left(\sum_{k} \int_{0}^{t} \hat{F}_{k} (s) ik \cdot (\mathcal{F}_{x,v}G)(t,s,l-k,k(t-s))ds\right).
\end{equation*}
From the Bessel-Parseval identity, we infer that 

\begin{equation*}
\norm{K_{G}}^{2}_{L^{2}_{x}}= \sum_{l} \left\vert\sum_{k} \int_{0}^{t} \hat{F}_{k}(s) ik \cdot (\mathcal{F}_{x,v}G)(t,s,l-k,k(t-s))ds\right\vert ^{2}.
\end{equation*}
By using Cauchy-Schwarz for $t$ and $k$, we have 
\begin{equation*}
\begin{split}
\norm{K_{G}}^{2}_{L^{2}_{x}}\lesssim \sum_{l}  (\sum_{k} \int_{0}^{t} \vert\hat{F}_{k}(s) &\vert ^{2} \vert k \cdot (\mathcal{F}_{x,v} G)(t,s,l-k,k(t-s)) \vert ds\\  
&\cdot \sum_{k} \int_{0}^{t} \vert k\cdot (\mathcal{F} _{x,v} G) (t,s,l-k,k(t-s))\vert  ds),
\end{split} 
\end{equation*}
and by integrating in time, we obtain that 
\begin{equation*}
\begin{split}
\norm{K_{G}}^{2}_{L^{2}([0,T],L^{2}_{x})}&\lesssim \sum_{l}  \int_{0}^{T} \int_{0}^{t}\sum_{k} \vert\hat{F_{k}}(s)\vert^{2}   \vert k \cdot (\mathcal{F}_{x,v} G)(t,s,l-k,k(t-s)) \vert ds dt\\  
&\cdot \underset{l}{\sup}\underset{t\in[0,T]}{\sup} \int_{0}^{t}\sum_{k} \vert k\cdot (\mathcal{F} _{x,v} G) (t,s,l-k,k(t-s))\vert  ds) \leq I\cdot II.
\end{split} 
\end{equation*}
Let us first consider the term $II$. We observe that for all $s_1\geq0$ , 
\begin{equation*}
 \underset{l}{\sup}\underset{t\in[0,T]}{\sup} \int_{0}^{t}\sum_{k} \vert k\cdot (\mathcal{F} _{x,v} G) (t,s,l-k,k(t-s))\vert  ds\leq
\end{equation*}
\begin{equation*}
 \underset{l}{\sup}\underset{t\in[0,T]}{\sup} \sum_{k} \left(\underset{0\leq s\leq t}{\sup}\underset{\xi}{\sup}\left[(1+\vert\xi\vert)^{s1}\vert (\mathcal{F} _{x,v} G) (t,s,l-k,\xi)\vert\right] \int_{0}^{t} \frac{\vert k\vert }{(1+\vert k\vert (t-s))^{s_{1}}} ds \right).
\end{equation*}
We choose $s_{1}>1$. Changing variables, we observe that 
\begin{equation*}
\int_{0}^{t}  \frac{\vert k\vert }{(1+\vert k\vert (t-s))^{s_{1}}} ds \leq \int_{0}^{\infty} \frac{1}{(1+\tau^{s_{1}})}d\tau < \infty.
\end{equation*}
It follows that 
\begin{equation*}
II \lesssim  \underset{l}{\sup}\underset{t\in[0,T]}{\sup} \sum_{k} \underset{0\leq s\leq t}{\sup}\underset{\xi}{\sup}\left[(1+\vert\xi\vert)^{s1}\vert (\mathcal{F} _{x,v} G) (t,s,l-k,\xi)\vert\right].
\end{equation*}
Next we choose $s_{2}> d/2$ so that $\sum \frac{1}{(1+\vert k\vert )^{2\,s_{2}}}< \infty$. By using  Cauchy-Schwarz, we deduce 
\begin{equation*}
\begin{split}
II&\lesssim \underset{t\in[0,T]}{\sup} \left(\sum_{k} \underset{0\leq s\leq t}{\sup}\underset{\xi}{\sup}\left((1+\vert k\vert)^{s_{2}}(1+\vert\xi\vert)^{s_{1}}\vert (\mathcal{F} _{x,v} G) (t,s,l-k,\xi)\vert  \right)^{2}\right)^{\frac{1}{2}}\\&\lesssim  \norm{G}_{T,s_{1},s_{2}}.
\end{split}
\end{equation*}

Now let us consider the other term $I$. By using Fubini, we have for all $\alpha\in (0,1)$,
\begin{align*}
 &\sum_{l}  \int_{0}^{T} \int_{0}^{t}\sum_{k} \vert\hat{F_{k}}(s)\vert^{2}   \vert k \cdot (\mathcal{F}_{x,v} G)(t,s,l-k,k(t-s)) \vert ds dt\\
&= \int_{0}^{T}\sum_{k} \vert\hat{F_{k}}(s)\vert^{2} \int_{s}^{T}  \sum_{l}   \vert k \vert \vert (\mathcal{F}_{x,v} G)(t,s,l-k,k(t-s)) \vert dt ds\\
&=  \int_{0}^{T}\sum_{k} \vert\hat{F_{k}}(s)\vert^{2} (1+\vert k\vert^{\alpha}) \int_{s}^{T}  \sum_{l}   \frac{\vert k \vert}{(1+\vert k\vert^{\alpha})} \vert (\mathcal{F}_{x,v} G)(t,s,l-k,k(t-s)) \vert dt ds\\
&= \norm{F}_{L^{2}([0,T],H_x^{\alpha})}^{2} \underset{k}{\sup}\underset{0\leq s \leq T}{\sup}\int_{s}^{T}  \sum_{l}   \frac{\vert k \vert}{(1+\vert k\vert^{\alpha})} \vert (\mathcal{F}_{x,v} G)(t,s,l-k,k(t-s)) \vert  dt.\\
\end{align*}
As previously, by choosing $s_{1}>1$, $s_{2}>d/2$, we have

\begin{align*}
&\underset{k}{\sup}\underset{0\leq s \leq T}{\sup}\int_{s}^{T}  \sum_{l}   \frac{\vert k \vert}{(1+\vert k\vert^{\alpha})} \vert (\mathcal{F}_{x,v} G)(t,s,l-k,k(t-s)) \vert dt \\
&\leq \underset{k}{\sup}\underset{0\leq s \leq T}{\sup}\int_{s}^{T}        \frac{\vert k \vert}{(1+\vert k\vert^{\alpha})(1+\vert k\vert(t-s))^{s_{1}}}\sum_{l} \underset{\xi}{\sup}(1+\vert\xi\vert)^{s1} \vert (\mathcal{F}_{x,v} G)(t,s,l-k,\xi) \vert dt \\
&\lesssim \underset{k}{\sup}\underset{0\leq s \leq T}{\sup}\int_{s}^{T}    \frac{\vert k \vert}{(1+\vert k\vert^{\alpha})(1+\vert k\vert(t-s))^{s_1}}   \norm{G}_{T,s_{1},s_{2}}.
\end{align*}
To conclude, we have to estimate $\underset{k}{\sup}\underset{0\leq s \leq T}{\sup}\int_{s}^{T}   \frac{\vert k \vert}{(1+\vert k\vert^{\alpha})(1+\vert k\vert(t-s))^{s_{1}}}dt$. We first use a change of variable 

\begin{align*}
&\underset{k}{\sup}\underset{0\leq s \leq T}{\sup}\int_{s}^{T}   \frac{\vert k \vert}{(1+\vert k\vert^{\alpha})(1+\vert k\vert(t-s))^{s_{1}}}dt \\
&\leq \underset{k}{\sup}\frac{1}{(1+\vert k\vert^{\alpha})}\underset{0\leq s \leq T}{\sup}\int_{0}^{\vert k\vert (T-s)} \frac {1}{(1+\tau)^{s_{1}}}d\tau\\
&\leq \underset{k}{\sup}\frac{1}{(1+\vert k\vert^{\alpha})}\int_{0}^{\vert k\vert T} \frac {1}{(1+\tau)^{s_{1}}}d\tau.
\end{align*}
Then, by Holder inequality, we have the estimate 
\begin{align*}
& \underset{k}{\sup}\frac{1}{(1+\vert k\vert^{\alpha})}\int_{0}^{\vert k\vert T} \frac {1}{(1+\tau)^{s_{1}}}d\tau\\
&\leq  \underset{k}{\sup}\frac{1}{(1+\vert k\vert^{\alpha})} \times (\vert k\vert T)^{\alpha} \left(\int_{0}^{\infty} \frac{1}{(1+\tau)^{\frac{s_{1}}{1-\alpha}}}d\tau\right)^{1-\alpha}\\
&\lesssim T^{\alpha}.
\end{align*}
We have finally shown that
\begin{equation*}
I\lesssim  T^{\alpha} \norm{G}_{T,s_{1},s_{2}} \norm{F}_{L^{2}([0,T],H_x^{\alpha})}^{2},
\end{equation*}
which ends the proof.
\end{proof}

\subsection{Conclusion of the estimates for $\rho_{\varepsilon}$}

By Lemma \ref{AOP} 
\begin{equation*}
\partial^{I}_{x}\rho_{\varepsilon} =\sum_{K\in\{1,...,d\}^{m}} K_{H_{(K,0),(I,0)}}\left(U*\left(\left(I-\varepsilon^{2}\Delta\right)^{-1}\partial_{x}^{K}\rho\right)\right)+\mathcal{R}_{I,0},
\end{equation*}
with 
\begin{equation*}
\norm{\mathcal{R}_{I,0}}_{L^{2}([0,T],L^{2}_{x})}\lesssim T^{\frac{1}{2}} \Lambda (T,R). 
\end{equation*}
Let $\alpha>0$ be given by Assumption (A1). Notice that if $\alpha\geq 1$, Assumption (A1) will be satisfied for any real inferior to $\alpha$, thus we can assume in the following that $\alpha<1$. \\ 
We get, thanks to Proposition \ref{MdK} 
\begin{align*}
&\norm{K_{H_{(K,0),(I,0)}}\left(U*\left(\left(I-\varepsilon^{2}\Delta\right)^{-1}\partial_{x}^{K}\rho\right)\right)}_{L^{2}([0,T],L_x^{2})}\\
&\lesssim T^{\alpha/2} \norm{\left(U*\left(\left(I-\varepsilon^{2}\Delta\right)^{-1}\partial_{x}^{K}\rho\right)\right)}_{L^{2}([0,T],H_x^{\alpha})}\\
&\lesssim T^{\alpha/2} R.
\end{align*}

We emphasize that this inequality is independent of $\varepsilon$. We can thus estimate all of the $\partial^{I}_{x}\rho_{\varepsilon}$ with $\vert I \vert \leq m$. Summing up all of the results we have, we have proved that 
\begin{equation*}
\norm{\rho_{\varepsilon}}_{L^{2}([0,T],H_x^{m})}\leq T^{\alpha/2}\Lambda(T,R).
\end{equation*}

\section{End of the proof of Theorem 2}

The previous sections have given all the tools to end the proof. Indeed, Section 3 has been dedicated to show that 

\begin{equation*}
\norm{\rho_{\varepsilon}}_{L^{2}([0,T],H_x^{m})} \leq T^{\frac{\alpha}{2}}\Lambda(T,R).
\end{equation*}
We have shown at the end of Section 2 that 
\begin{equation*}
\norm{f_{\varepsilon}}_{L^{\infty}([0,T],\mathcal{H}^{m-1}_{2r})}\leq \norm{f_{\varepsilon}^{0}}_{\mathcal{H}^{m-1}_{2r}} +T^{\frac{1}{2}}\Lambda(T,R), 
\end{equation*}
from which we deduce that for all $0<T< T^{\varepsilon}$
\begin{equation*}
\mathcal{N}_{m,2r} (T,f_{\varepsilon}) \leq \norm{f_{\varepsilon}^{0}}_{\mathcal{H}^{m-1}_{2r}} +T^{\frac{1}{2}}\Lambda(T,R)  +T^{\frac{\alpha}{2}}\Lambda(T,R).
\end{equation*}
Next, we consider $R$ large enough such that 
\begin{equation*}
\frac{1}{2}R> \norm{f_{\varepsilon}^{0}}_{\mathcal{H}^{m-1}_{2r}}.
\end{equation*}
With $R$ being fixed, we can find by continuity a $T^{\#}$ small enough such that the previous estimates are satisfied and that for every $T\in [0,T^{\#}]$, we have
\begin{equation*}
T^{\frac{1}{2}}\Lambda(T,R)  +T^{\frac{\alpha}{2}}\Lambda(T,R) < \frac{1}{2}R.
\end{equation*}
Thus, for every $T\in [0,T^{\#}]$, $\mathcal{N}_{m,2r}(t,f_{\varepsilon}) <R$ and so $T^{\varepsilon}>T^{\#}$. This means that the time $T^{\varepsilon}$ is uniformly bounded from below by a certain $T^{\#}$ which is independent of $\varepsilon$. That concludes the proof of Theorem \ref{theorem2}.

\section{Solutions for the limit problem}

Let us recall the limit system, which is the system we want to study in Theorem \ref{MT}  

\begin{equation*}
 \left\{
    \begin{array}{ll}
        \partial_t{f}+v\cdot \nabla_x f + E  \cdot \nabla_v f  = 0,\\
        E =-\nabla_x \int_{\mathbb{T}^d} U(x-y) \left( \int_{\mathbb{R}^d}f(t,y,v)dv\right)dy , \\
	f (0,x,v) = f^0 (x,v).
    \end{array}
\right.
\end{equation*}
This system is a non linear transport equation. But taking the formal limit $\varepsilon \rightarrow 0$, we have lost the elliptic propriety that we had for the system (\ref{SystP}). Proving local well-posedness is thus more challenging, and we can use Theorem $\ref{theorem2}$ to find a solution of (\ref{SystRef}).
\bigbreak
Indeed, this theorem gives us a family of functions $f_{\varepsilon}$ solutions of (\ref{SystP}) on a time interval $[0,T]$, $T$ being independent of $\varepsilon$. We can use this family of functions to find a solution of the limit system. In the following, we will extract a subsequence that will converge to a certain function $f$. We next have to show that $f$ is actually a solution of the limit system. We will start by showing the uniqueness, by similar arguments that we used in Theorem \ref{theorem2}.

\subsection{Uniqueness of the solution}
\begin{prop}\label{Unicity}
Let $f_{1}$, $f_{2} \in \mathcal{C} ([0,T],\mathcal{H}^{m-1}_{2r})$ with $m>m_0$ and $2r>r_0$ be two solutions of (\ref{SystRef}) with the same initial condition $f^{0}$. We write $\rho_i=\int f_i dv$ and we suppose that $\rho_i \in L^{2}([0,T],H_x^{m})$. Then $f_1 =f_2$ on $[0,T] \times \mathbb{T}^{d}  \times \mathbb{R}^{d}$.
\end{prop}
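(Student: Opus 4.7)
\emph{Plan.} Let $f:=f_1-f_2$, $\rho:=\rho_1-\rho_2=\int_{\mathbb{R}^d}f\,dv$ and $E:=E_1-E_2=-\nabla_x(U*\rho)$. Subtracting the two Vlasov equations, $f$ satisfies the linear inhomogeneous transport equation
\begin{equation*}
\partial_t f + v\cdot\nabla_x f + E_1\cdot\nabla_v f = -E\cdot\nabla_v f_2, \qquad f|_{t=0}=0.
\end{equation*}
It is enough to show that $\rho\equiv 0$ on a short subinterval $[0,T^{\#}]$: once this is known, $E\equiv 0$ there, the equation reduces to a linear transport with zero source and zero initial data, so $f\equiv 0$ on $[0,T^{\#}]$ by the method of characteristics. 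Because the constants controlling $T^{\#}$ depend only on the global bounds of $f_1,f_2,\rho_1,\rho_2$ on $[0,T]$, a finite iteration on consecutive intervals of length $T^{\#}$ then covers $[0,T]$.

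To prove $\rho\equiv 0$ on $[0,T^{\#}]$, I would mimic the averaging-lemma strategy of Section~3 for this difference equation. First, straighten the drift $\partial_t+v\cdot\nabla_x+E_1\cdot\nabla_v$ via a Burgers field $\Phi_1$ satisfying $\partial_t\Phi_1+\Phi_1\cdot\nabla_x\Phi_1=E_1$ and $\Phi_1|_{t=0}=v$; since $f_1\in\mathcal{C}([0,T],\mathcal{H}^{m-1}_{2r})$ with $m>m_0$ and $\rho_1\in L^2([0,T],H^m_x)$, Lemma~\ref{estphi} applies. Setting $g(t,x,v):=f(t,x,\Phi_1(t,x,v))$, a direct computation gives
\begin{equation*}
\partial_t g + \Phi_1\cdot\nabla_x g = -E(t,x)\cdot(\nabla_v f_2)(t,x,\Phi_1(t,x,v)), \qquad g|_{t=0}=0,
\end{equation*}
which is simpler than (\ref{eqG}) since neither the coupling term $\mathcal{M}_{I,J}\mathcal{G}$ nor the remainder $S_{I,J}$ is present. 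Integrating along the characteristics $X_1$ of $\partial_t+\Phi_1\cdot\nabla_x$ (Lemma~\ref{LemmaPsi}), multiplying by $\mathcal{J}=|\det\nabla_v\Phi_1|$ and using $\int g\,\mathcal{J}\,dv=\int f\,dv=\rho$, then making the change of variable $v=\Psi_1(s,t,x,w)$, one obtains an identity of the form
\begin{equation*}
\rho(t,x) = -\int_0^t\int_{\mathbb{R}^d} E(s,x-(t-s)w)\cdot H(t,s,x,w)\,dw\,ds = K_H(U*\rho)(t,x),
\end{equation*}
where $H$ is the analogue of the kernel of Lemma~\ref{AOP} (without the $\mathfrak{M}$ factor), built from $\nabla_v f_2$ composed with $\Phi_1$ and the Jacobians $\mathcal{J}$ and $\tilde{\mathcal{J}}=|\det\nabla_v\Psi_1|$. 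Because $m-2>1+d$, the product estimates of Section~2 combined with Lemmas~\ref{estphi} and \ref{LemmaPsi} yield $\|H\|_{T,s_1,s_2}\leq \Lambda(T,R)$ for some admissible $s_1>1$, $s_2>d/2$.

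Pick any $\alpha'\in(0,\min(\alpha,1))$. Since $\widehat{U}(0)=0$ and $|\widehat{U}(k)|\leq C|k|^{-\alpha}\leq C|k|^{-\alpha'}$ for $k\neq 0$, the Plancherel definition of the $H^{\alpha'}_x$ norm gives $\|U*\rho\|_{H^{\alpha'}_x}\leq C\|\rho\|_{L^2_x}$ pointwise in time. Applying Proposition~\ref{MdK} to the identity above yields
\begin{equation*}
\|\rho\|_{L^2([0,T],L^2_x)} \leq C\,T^{\alpha'/2}\,\|U*\rho\|_{L^2([0,T],H^{\alpha'}_x)} \leq C'\,T^{\alpha'/2}\,\|\rho\|_{L^2([0,T],L^2_x)},
\end{equation*}
with $C'$ depending only on the uniform bounds of $f_1,f_2,\rho_1,\rho_2$. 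Choosing $T=T^{\#}$ small enough that $C'(T^{\#})^{\alpha'/2}<1/2$ forces $\rho\equiv 0$ on $[0,T^{\#}]$, and the conclusion follows as described above. The main technical obstacle is the rigorous derivation of the identity $\rho=K_H(U*\rho)$ together with the bound $\|H\|_{T,s_1,s_2}\leq\Lambda(T,R)$: although conceptually identical to Section~3, the available regularity $f_i\in\mathcal{C}([0,T],\mathcal{H}^{m-1}_{2r})$ is one derivative less than what was used there, and one must verify that the condition $m>m_0$ is sufficient both for Lemmas~\ref{estphi}, \ref{LemmaPsi} to apply and for the kernel $H$ to lie in the weighted-Sobolev space required by Proposition~\ref{MdK}.
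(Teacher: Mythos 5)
Your proposal follows essentially the same argument as the paper's: straighten the transport field with a Burgers flow, derive the fixed-point identity $\rho = K_H(U*\rho)$ for the difference of densities, apply Proposition~\ref{MdK} to obtain a contraction on a short time interval, and then iterate to cover $[0,T]$. The only (immaterial, symmetric) difference is that you straighten with $E_1$ and place $\nabla_v f_2$ in the kernel, whereas the paper uses $E_2$ and $\nabla_v f_1$; your explicit reduction to $\alpha'<1$ is a small point of care the paper handles earlier in Section~4.
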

\begin{proof}
Let $f=f_1-f_2$. We denote by $E_2$ the force field associated with the density $\rho_2$ and $E$ associated with $\rho=\rho_1-\rho_2$. We have that $f$ solves the equation 
\begin{equation}\label{equaf}
\partial_t f+v\cdot \nabla _x f +E\cdot \nabla_v f_1 +E_2\cdot \nabla_v f =0.
\end{equation}
Let $\Phi$ satisfying the Burgers equation (\ref{Burgers}) associated with $E_2$, with initial condition $\Phi(0,x,v)=v$. As previously, we write $g(t,x,\Phi(t,x,v))=f(t,x,v)$. We obtain that $g$ is solution of 
\begin{equation*}
\partial_t g +\Phi \cdot\nabla_x g +E\cdot \nabla_v f_1=0.
\end{equation*}
Recall that

\begin{equation*}
\int_{\mathbb{R}^{d}}g(t,x,v)\mathcal{J}(t,x,v) dv=\rho (t,x),
\end{equation*}
for $\mathcal{J}(t,x,v)= \vert \det\nabla_{v}\Phi(t,x,v)\vert$.\\
We consider the characteristics $X$ satisfying 
\begin{equation*}
\partial_{t}X(t,s,x,v) =\Phi (t,X(t,s,x,v),v), \quad X(s,s,x,v)=x.
\end{equation*}
Following the steps of Lemma \ref{AOP}, we have

\begin{equation*}
\rho (t,x) = K_H (U*\rho),
\end{equation*}
with $H(t,s,x,v) = (\nabla_v f_1)(s,x-(t-s)v,\Psi(t,s,x,v))\mathcal{J}(t,x,\Psi(t,s,x,v))\tilde{\mathcal{J}}(t,s,x,v)$.

We use proposition \ref{MdK} to show that 
\begin{equation*}
\begin{split}
\norm{\rho}_{L^{2}([0,T],L^{2}_x)}&\leq C T^{\alpha/2} \norm{(U*\rho)}_{L^{2}([0,T],H^{\alpha}_x)}
\\&\leq C T^{\alpha/2} \norm{\rho}_{L^{2}([0,T],L^{2}_x)}.
\end{split}
\end{equation*}
We take $T_0$ such that $C T_0^{\alpha/2} < 1 $, and deduce that we must have $\rho=0$ on $[0,T_0]$. Next, we go back to (\ref{equaf}) on $[0,T_0]$
\begin{equation*}
\partial_t f+v\cdot \nabla _x f +E_2\cdot \nabla_v f =0.
\end{equation*}
$f$ is solution of an homogeneous transport equation, with initial condition being $0$. That means that $f=0$ on all $[0,T_{0}]$ . To obtain this result on $[0,T]$, we make the same reasoning on $[T_{0},T]$ to obtain the result on $[0,2T_0]$ and so on.

\end{proof}

\subsection{Existence of the solution}

\bigbreak

All is left to do to prove Theorem \ref{MT} is to show the existence of solutions.\\
Let ($f_{\varepsilon}$) be the family of functions solutions of (\ref{SystP}) with the initial conditions $f^{0}_{\varepsilon}=f^0$. Thanks to Theorem \ref{theorem2}, there exist $T$ and $R$ independent of $\varepsilon$ such that $f_{\varepsilon}\in\mathcal{C}([0,T],\mathcal{H}^{m}_{2r})$ satisfies 
\begin{equation}\label{Major}
\underset{\varepsilon\in (0,1]}{sup}\mathcal{N}_{m,2r}(T,f_{\varepsilon})\leq R.
\end{equation}
We get by (\ref{Major}) that $f_{\varepsilon}$ is uniformly bounded in $\mathcal{C}([0,T],\mathcal{H}^{m-1}_{2r})$. By (\ref{SystRef}), we obtain that $\partial_t  f_{\varepsilon}$ is uniformly bounded in $L^{\infty}([0,T],\mathcal{H}^{m-2}_{2r-1})$. The Ascoli theorem gives the existence of a function $f \in \mathcal{C}([0,T],L^{2}_{x,v})$ and a sequence $\varepsilon_n$ such that $f_{\varepsilon_n}$ converges to $f$ in $\mathcal{C}([0,T],L^{2}_{x,v})$. By interpolation, we actually have convergence in $\mathcal{C}([0,T],\mathcal{H}^{m-1-\delta}_{2r-\delta})$ for all $\delta>0$. By Sobolev embedding,  $f_{\varepsilon_n}$ converges to $f$ in $L^{\infty}([0,T]\times \mathbb{T}^{d}\times \mathbb{R}^{d})$ and $\rho_{\varepsilon_n}$ converges to $\rho=\int f dv$ in $L^2 ([0,T],L_x^2) \cap L^{\infty}([0,T]\times \mathbb{R}^{d})$. Thus, the limit function $f$ solves the system (\ref{SystRef}). \bigbreak
To conclude, we want to apply Proposition \ref{Unicity}. We only have to show that $f \in \mathcal{C}([0,T],\mathcal{H}^{m-1}_{2r})$ and that $\rho\in L^{2}([0,T],H_x^{m})$. By weak compacity arguments,  $f\in L^{\infty}([0,T],\mathcal{H}^{m-1}_{2r})$ and $\rho \in L^{2}([0,T],H_x^m)$. We then use an energy estimate which have been shown previously (the formula (\ref{energy})) to show that 
\begin{equation*}
\frac{d}{dt}\norm{f}_{\mathcal{H}^{m-1}_{2r}}^{2}\leq C.
\end{equation*}
It follows that $f\in \mathcal{C}([0,T],\mathcal{H}^{m-1}_{2r})$. We can finally apply Proposition \ref{Unicity} which gives the uniqueness of $f$ and concludes the proof of Theorem \ref{MT}. 

\bigbreak

\newpage
\newpage

\end{document}